\def\XXint#1#2#3{{\setbox0=\hbox{$#1{#2#3}{\int}$ }
\vcenter{\hbox{$#2#3$ }}\kern-.6\wd0}}
\def\ds{\,{\textrm{d}}s}
\crefname{hypothesis}{Hypothesis}{Hypotheses}
\crefname{fact}{Fact}{Facts}
\title{Higher-Order Boundary Conditions for Atomistic Dislocation Simulations\thanks{Submitted to the editors DATE.  
\funding{The authors are partially supported by NSFC grant 12271360. LZ is also partially supported by the Shanghai Municipal Science and Technology Project 23JC1402300 and the Fundamental Research Funds for the Central Universities.}}}
\author{Xinyi Wei\thanks{School of Mathematical Sciences, Institute of Natural Sciences and MOE-LSC, Shanghai Jiao Tong University, Shanghai 200240, China
  (\email{xinyiwei@sjtu.edu.cn}, \email{lzhang2012@sjtu.edu.cn}).}
  \and Julian Braun\thanks{Department of Mathematics, Heriot-Watt University, Edinburgh, EH14 4AS, UK (\email{j.braun@hw.ac.uk}).}
\and Yangshuai Wang\thanks{Corresponding author. Department of Mathematics, Faculty of Science, National University of Singapore, 10 Lower Kent Ridge Road, Singapore
  (\email{yswang@nus.edu.sg}).}
\and Lei Zhang\footnotemark[2]}
\DeclareMathOperator{\divo}{div}
\definecolor{lzcol}{rgb}{1, 0, 0}
\newcommand{\xy}[1]{{\color{brown} #1}}
\definecolor{yscol}{HTML}{6622AA}
\begin{document}

\maketitle

\begin{abstract}
We present a higher-order boundary condition for atomistic simulations of dislocations that address the slow convergence of standard supercell methods. The method is based on a multipole expansion of the equilibrium displacement, combining continuum predictor solutions with discrete moment corrections. The continuum predictors are computed by solving a hierarchy of singular elliptic PDEs via a Galerkin spectral method, while moment coefficients are evaluated from force-moment identities with controlled approximation error. A key feature is the coupling between accurate continuum predictors and moment evaluations, enabling the construction of systematically improvable high-order boundary conditions. We thus design novel algorithms, and numerical results for screw and edge dislocations confirm the predicted convergence rates in geometry and energy norms, with reduced finite-size effects and moderate computational cost.
\end{abstract}

\begin{keywords}
Crystal defects, Dislocations, Higher-order boundary conditions, Elastic
theory
\end{keywords}

\begin{MSCcodes}
65N12, 65N15, 65M70, 65Q10, 65Z05
\end{MSCcodes}

\section{Introduction}
\label{sec:intro}

Dislocations play a central role in determining the mechanical and physical properties of materials. Their presence typically distorts the host lattice, giving rise to long-range elastic fields~\cite{eshelby1953anisotropic}. This work is concerned with the precise characterization of the geometric and energetic properties of dislocations in crystalline solids~\cite{groger2008multiscale, olson2023elastic, steinhauser2017computational, wang2024theoretical, yip2007handbook}. Numerical simulations of dislocations necessarily confine the system to finite computational domains. In practice, the equilibria are often computed within standard supercells, where artificial (e.g., clamped or periodic) boundary conditions must be imposed~\cite{cai2003periodic, li2005multiscale, li2006variational}. However, the convergence of the supercell method with respect to cell size is often slow~\cite{braun2020sharp, braun2025higher, EOS2016, trinkle2008lattice}. The choice of boundary conditions significantly affects cell-size effects, motivating the development of {\it higher-order boundary conditions} to accelerate convergence.

A key step in this endeavor is to analyze the elastic far-fields induced by dislocations, often through their low-rank structure. This is typically achieved using continuum linear elasticity and the defect dipole tensor~\cite{eshelby1956continuum, nowick1963anelasticity}. Braun et al.~\cite{braun2019effect, braun2022asymptotic} developed a unified mathematical framework for modeling elastic far-fields via low-order defect structures. Within this framework, the defect equilibrium is decomposed into continuum predictors and discrete multipole terms. This decomposition leads to improved convergence of supercell approximations with respect to cell size theoretically. Recently, Braun et al.~\cite{braun2025higher} developed a numerical framework for point defects. 

Extending such approaches to dislocations presents significant challenges, including the incorporation of multipole moments from infinite lattices into finite computational domains and the accurate resolution of singularities in higher-order continuum predictor equations near the dislocation core; to address these, we extend the theoretical framework of \cite{braun2022asymptotic} by formulating high-order boundary conditions and developing a corresponding numerical scheme. We introduce rescaled variables to remove core singularities, enabling the accurate and efficient solution of the higher-order equations, and employ a tailored Galerkin spectral method that leverages spectral convergence for high accuracy at low computational cost. To improve multipole representations within finite domains, we combine a continuous version of multipole expansions with an iterative refinement strategy~\cite{braun2025higher} to yield systematically improvable boundary conditions with controlled accuracy. Although the main overhead arises in geometry optimization, the overall efficiency is substantially enhanced by our boundary formulation's accelerated convergence, and validation on screw and edge dislocations shows our method improves geometry and energy convergence rates, achieving higher accuracy with faster domain-size convergence and lower overall cost.


This paper focuses on a comprehensive understanding of higher-order boundary conditions for dislocation simulations, aiming to achieve improved accuracy at moderate computational cost. Future work will explore broader applications of the proposed framework to multiscale coupling methods~\cite{chen2022qm, wang2021posteriori, wang2024posteriori}. However, extending the method to more complex scenarios, such as cracks or grain boundary structures, poses fundamental challenges that lie beyond the scope of the present study and will require the development of new theoretical tools.



\subsection*{Outline} 

The paper is organized as follows. In \S~\ref{sec:disloc}, we present the variational formulation and multipole expansion for dislocation equilibria. \S~\ref{sec:Scheme} introduces a theoretical framework for accelerated defect simulations using a continuous multipole expansion. In \S~\ref{sec:pde}, we develop numerical methods for solving the resulting higher-order predictor equations and design the core algorithm (Algorithm~\ref{alg:moment_iter_a}). \S~\ref{sec:numexp} presents numerical results for screw and edge dislocations. \S~\ref{sec:conclusion} concludes with a summary and outlook. Technical details and proofs are provided in Appendices~\ref{app:premilinary results}--\ref{app:supply and proof}.

\subsection*{Notation}

We denote the abstract duality pairing between a Banach space and its dual by $\langle \cdot, \cdot \rangle$. 
Norms are denoted by $|\cdot|$ for Euclidean or Frobenius norms, and $\|\cdot\|$ for operator norms. The notation $|A| \lesssim B$ means $|A| \leq C B$ for a constant $C$ independent of problem parameters (e.g., lattice size, test functions). $B_r(x)$ denotes the ball centered at $x$ with radius $r$.

We introduce a $k$-tuple of vectors in $\mathbb{R}^d$, denoted as $\boldsymbol{\sigma} = (\sigma^{(1)}, \ldots, \sigma^{(k)}) \in (\mathbb{R}^d)^k$. The $k$-fold tensor product is expressed as $\boldsymbol{\sigma}^\otimes := \bigotimes_{i=1}^k \sigma^{(i)} = \sigma^{(1)} \otimes \cdots \otimes \sigma^
{(k)}$,
and the vector space spanned by these tensor products is identified as $(\mathbb{R}^d)^{\otimes k}$. We also consider the symmetric tensor product by $\boldsymbol{\sigma}^\odot := {\rm sym~} \boldsymbol{\sigma}^\otimes := \sum_{g \in S_k} g(\boldsymbol{\sigma})^\otimes /k!$, where $S_k$ is the symmetric group encompasses all permutations of $\{1, \ldots, k\}$. For a single vector $v \in \mathbb{R}^d$, we denote its $k$-fold tensor product by $v^{\otimes k} := v \otimes \cdots \otimes v \in (\mathbb{R}^d)^{\otimes
k}$. 

Consider two tensors $\mathbf{C}, \mathbf{U} \in (\mathbb{R}^\mathcal{R})^{\otimes k}$ with a finite interaction range $\mathcal{R}$. Suppose $\mathbf{C}$ is expressed in the natural basis ${E_{\boldsymbol{\rho}}}$ of $(\mathbb{R}^d)^{\otimes k}$ as $\mathbf{C} = \sum_{\boldsymbol{\rho} \in \mathcal{R}^k} \mathbb{C}_{\boldsymbol{\rho}} E_{\boldsymbol{\rho}}$, then the contraction can be written as $\mathbf{C}:\mathbf{U} = \sum_{\rho\in\mathcal{R}}\mathbf{C}_{\rho}\mathbf{U}_{\rho}$.
\section{Background: Modeling of Dislocations}
\label{sec:disloc}

This work focuses on the modeling of crystalline dislocations, with particular attention to single antiplane screw and edge dislocations. Such settings enable the development of rigorous computational  framework. Following the construction in~\cite{braun2019effect, EOS2016, hudson2015analysis}, we first present the atomistic model in \S~\ref{subsec:atom-model}. In \S~\ref{sec:sub:equilibrium and expansion}, we introduce the multipole expansion of dislocation equilibrium, which forms the theoretical cornerstone of this work.

\subsection{Atomistic model}
\label{subsec:atom-model}

We consider a straight dislocation that is periodic along the dislocation line, following the setup in~\cite{EOS2016}. This allows a reduction to a two-dimensional lattice model on the plane normal to the dislocation line~\cite{olson2023elastic}. The homogeneous crystal is represented by a Bravais lattice \( \Lambda := A \mathbb{Z}^2 \), where \( A \in \mathbb{R}^{2 \times 2} \) is a non-singular matrix. Atomic displacements are defined as \( u : \Lambda \to \mathbb{R}^N \), where \( N = 1 \) for anti-plane screw dislocations and \( N = 2 \) for edge dislocations.
For $\ell, \rho \in \Lambda$, we denote discrete differences by $D_\rho u (\ell) := u(\ell + \rho) - u(\ell)$.
We assume a finite interaction neighbourhood $\mathcal{R} \subset \Lambda-\ell$, which spans the lattice ${\rm span}_{\mathbb{Z}}\mathcal{R} = \Lambda$ throughout the discussion. Define the discrete difference stencil $Du(\ell):=D_{\mathcal{R}}u(\ell):=(D_{\rho}u(\ell))_{\rho \in \mathcal{R}}$. To introduce higher discrete differences we denote by $D_{\bm \rho} u := D_{\rho_1}\ldots D_{\rho_j} u$, for ${\bm \rho}\in(\mathcal{R})^j$.

We define two useful discrete energy spaces by
\begin{align}
\mathcal{H}^1 &= \mathcal{H}^{1}(\Lambda) = \{u: \Lambda \rightarrow \mathbb{R}^N \,\big|\, \Vert Du \Vert _{\ell^2} < \infty \},\\
\mathcal{H}^{\rm c} &= \mathcal{H}^{\rm c}(\Lambda) = \{ u: \Lambda \rightarrow \mathbb{R}^N \,\big|\, \text{supp}(Du)\text{ bounded} \}.
\end{align}
where $\mathcal{H}^{\rm c}$ is a dense subspace of $\mathcal{H}^{1}$ with compact support~\cite{luskin2013atomistic}. 

Let \( \hat{x} \in \mathbb{R}^2 \) denote the dislocation core, and define the branch cut $\Gamma := \{ x \in \mathbb{R}^2 : x_2 = \hat{x}_2,\; x_1 \geq \hat{x}_1 \}$, chosen such that \( \Gamma \cap \Lambda = \emptyset \). Due to the topological singularity, dislocations in an infinite lattice exhibit infinite elastic energy. Following~\cite{braun2022asymptotic}, we decompose the total displacement $u = u_0^{\rm C} + \bar{u}$, into a far-field predictor \( u_0^{\rm C} \) and a finite-energy core corrector \( \bar{u} \). To construct \( u_0^{\rm C} \), we begin with the continuum linear elasticity (CLE) solution \( u^{\rm lin} \in C^\infty(\mathbb{R}^2 \setminus \Gamma; \mathbb{R}^N) \), which satisfies
\begin{equation}
\label{eq:u0}
\begin{alignedat}{2}
- \operatorname{div} \, \mathbb{C}[\nabla u^{\rm lin}] &= 0,           &\qquad& \text{in } \mathbb{R}^2 \setminus \Gamma, \\
u^{\rm lin}(x+) - u^{\rm lin}(x-) &= -\mathsf{b},                       &\qquad& \text{for } x \in \Gamma \setminus \{\hat{x}\}, \\
\nabla_{e_2} u^{\rm lin}(x+) - \nabla_{e_2} u^{\rm lin}(x-) &= 0,      &\qquad& \text{for } x \in \Gamma \setminus \{\hat{x}\}, \\
|\nabla u^{\rm lin}(x)| &\to 0,                                        &\qquad& \text{as } |x| \to \infty, \\
\int_{\partial B_1(\hat{x})} \mathbb{C}[\nabla u^{\rm lin}] \nu \, \mathrm{d}\sigma &= 0. &\qquad& 
\end{alignedat}
\end{equation}
Here, \( \mathbb{C} \) denotes the linearised Cauchy--Born tensor~\cite{hudson2012stability, ming2007cauchy, ortner2013justification}. The higher-order continuum predictor will be discussed in \S~\ref{sec:sub:equilibrium and expansion}.
For the case of antiplane screw dislocations, we have $u_0^{\rm C} = u^{\rm lin}$. For edge dislocations, in order to ensure that the corrected predictor $u_0$ possesses $C^\infty$ regularity in the half-space, we define the predictor~\cite{EOS2016}:
\begin{equation}
  \label{eq:disl:defn_u0}
  u_0^{\rm C}(x) := u^{\rm lin}\big(\xi^{-1}(x)\big), \quad {\rm where} \quad \xi(x) := x
  - {\sf{b_{12}}} \frac{1}{2\pi} \eta\big(|x-\hat{x}|/\hat{r}\big) \arg(x-\hat{x}),
\end{equation}
$\arg(x)$ denotes the angle in $(0,2\pi)$ between $\mathsf{b}_{12}\propto e_1$ and
$x$, and $\eta \in C^\infty(\mathbb{R})$ with $\eta = 0$ in $(-\infty, 0]$,
$\eta = 1$ in $[1, \infty)$ and $\eta' > 0$ in $(0, 1)$. 

We assume throughout that the site energy potential $V \in C^{K}(\mathbb{R}^{N\times \mathcal{R}})$ with $K \geq 4$, and that it satisfies the point symmetry $V(A) = V((-A_{-\rho})_{\rho \in \mathcal{R}})$. Furthermore, we assume $V({0}) = 0$ and that $V$ is invariant under lattice slip. We provide further details of these assumptions and symmetry in Appendix~\ref{app:V}. 

The energy difference functional for displacement field is given by:
\begin{equation}
\label{eq:disl:Ediff}
\mathcal{E}(u) := \sum_{\ell \in \Lambda} \Big(V\big(Du(\ell) \big) -V\big(Du^{\rm C}_0(\ell)\big) \Big).
\end{equation}
The Hessian operator of $\mathcal{E}(u)$ at the reference state is defined as:
\begin{equation}
H[u,v] = \delta^{2}\mathcal{E}(0)[u,v] = \mathop{\sum}\limits_{l \in \Lambda} \nabla^{2}V(0)[Du,Dv].
\end{equation}
For lattice displacements $u : \Lambda \to \mathbb{R}^N$ that are close to equilibrium, we define
\begin{equation}
\label{eq:force}
H[u](\ell) :=-{\rm Div}\big(\nabla^2V(0)[Du]\big).
\end{equation}
with ${\rm Div} A = -\sum_{\rho\in \mathcal{R}}D_{-\rho}A_{\cdot\rho}$. 
We assume throughout that the Hamiltonian $H=\delta^2 \mathcal{E}({0})$ is stable~(cf.~\cite[Eq.~(4)]{braun2022asymptotic}).
For a stable operator $H$, there exists a {\it lattice Green's function} (inverse of $H$) $\mathcal{G}:\Lambda \rightarrow \mathbb{R}^{N\times N}$ such that 
\begin{eqnarray}\label{eq:def_lattice_G}
H[\mathcal{G}e_k](\ell) = e_k \delta_{\ell, 0}, \qquad \textrm{for}~1 \leq k \leq N. 
\end{eqnarray}
We write $\mathcal{G}_k:=\mathcal{G}e_k$ for simplicity.



\subsection{Multipole expansion of dislocations equilibrium}
\label{sec:sub:equilibrium and expansion}
In this section, we review the equilibrium formulation and multipole expansion for dislocations, which characterize the structure of the discrete elastic far-field.

The equilibrium displacement $\bar{u} \in \mathcal{H}^{1}(\Lambda)$ satisfies
\begin{equation}
  \label{eq:equil_edge}
   \delta \mathcal{E}(\bar{u})[v] = 0 \qquad \forall v \in \mathcal{H}^{\rm c}(\Lambda).
\end{equation}
It is known~\cite{chen19, EOS2016} that \( |D\bar{u}(\ell)| \leq C |\ell|^{-d} \log|\ell| \), showing a slow convergence of standard supercell methods. To improve this, we employ a multipole expansion that captures far-field behavior more accurately and enables the design of higher-order boundary conditions.

In \S~\ref{subsec:atom-model}, we introduced the CLE predictor \( u_0^{\rm C} \), defined by~\eqref{eq:u0} and~\eqref{eq:disl:defn_u0}, and derived from the linearised elasticity theory of dislocations~\cite{anderson2017theory}. This predictor captures the leading-order elastic response of the material in the far field. For \( u : \mathbb{R}^2 \to \mathbb{R}^N \), the CLE model is governed by the continuum energy functional $\mathcal{E}^{\rm C}(u) := \int_{\mathbb{R}^2} W(\nabla u)\, \mathrm{d}x$,
where the energy density \( W \) is defined by the Cauchy--Born rule:
\begin{equation}\label{eq:Wcb}
W(\mathsf{F}) = \frac{1}{|\det A|} V\big((\mathsf{F}\rho)_{\rho \in \mathcal{R}}\big), \qquad \mathsf{F} \in \mathbb{R}^{N \times 2}.
\end{equation}
Linearizing the energy functional around the reference state yields the continuum operator $H^{\rm C} := \delta^2 \mathcal{E}^{\rm C}(0)$,
which plays a central role throughout this work.

To characterize the far-field beyond leading order, we construct higher-order CLE predictors \( u_i^{\rm C} \) for \( i \geq 1 \). Following~\cite{braun2022asymptotic}, these satisfy a hierarchy of elliptic PDEs:
\begin{equation}\label{eq:uCiPDE}
H^{\rm C}[u_i^{\rm C}]= \mathcal{S}_i(\tilde{u}_0^{\rm C}, \dots, \tilde{u}_{i-1}^{\rm C}),
\end{equation}
where \( \mathcal{S}_i \) are nonlinear differential operators depending on the linearized Cauchy--Born tensor \( \mathbb{C} = \nabla^2 W(0) \) and its higher-order derivatives. Here, \( \tilde{u}^{\rm C}_i := u_i^{\rm C} + u_i^{\rm CMP} \) comprises a continuum predictor $u^{\rm C}_i$ and a continuous multipole correction \( u_i^{\rm CMP} \), defined in~\eqref{eq:u_acR} and~\eqref{eq:uiCMP_2}.

A key feature of~\eqref{eq:uCiPDE} is that each \( \mathcal{S}_i \) depends only on \( \tilde{u}_0, \dots, \tilde{u}_{i-1} \) and is independent of \( u_i^{\rm C} \). Consequently, the governing equation for each \( u_i^{\rm C} \) is a linear elliptic problem with a known source term \( \mathcal{S}_i \), allowing recursive computation. The higher-order ($i\geq 1$) CLE predictors satisfy decay estimates
\begin{equation}\label{eq:uCestimate}
\big\lvert \nabla^j u_i^{\rm C} (\ell)\big\rvert \leq C \lvert \ell \rvert^{-j-i} \log^i (\lvert \ell \rvert) \quad\text{for all }j\in\mathbb{N}_0.
\end{equation}

The following theorem characterizes the discrete elastic far-field induced by dislocations using the higher-order CLE predictors introduced above, extending~\cite[Theorem 3.1]{braun2022asymptotic} to edge dislocations. While the statements for screw and edge dislocations are analogous, the edge case involves additional technical challenges arising from lattice mismatch, which we resolve by applying a transformation~\eqref{eq:disl:defn_u0} to correct the CLE predictor \( u^{\rm C}_0 \). A detailed proof is given in Appendix~\ref{app:proof-thm-disloc}.

\begin{theorem}
\label{thm:dislocation}
Suppose that $V \in C^K(\mathbb{R}^{N\times \mathcal{R}}), K\geq J + 2 + p$ with $p\geq 0$ and $J\geq 2$. Let $\mathcal{S}$ be linearly independent with ${\rm span}_{\mathbb{Z}}\mathcal{S} = \Lambda$ and $\mathcal{G}:\Lambda \rightarrow \mathbb{R}^{N\times N}$ be a lattice Green's function defined by~\eqref{eq:def_lattice_G}.
Let $\bar{u} \in \mathcal{H}^1(\Lambda)$ solve \eqref{eq:equil_edge}. 
Then, there exist $u^{\rm C}_i \in C^{\infty}$ and coefficients  $b^{(i,k)}_{\rm exact} \in (\mathbb{R}^{2})^{\odot i}$ such that
\begin{equation}
\label{eq:exp_b}
\bar{u}= \sum_{i=0}^{p} u_i^{\rm C} + \sum_{i=1}^{p} \sum_{k=1}^{N} b^{(i,k)}_{\rm exact} : D^i_\mathcal{S} \mathcal{G}_k + r_{p+1},
\end{equation}
where each $u_i^{\rm C}$ solves the higher-order predictor equation~\eqref{eq:uCiPDE}, and the second summation represents the discrete multipole contributions.
Furthermore, the remainder $r_{p+1}$ satisfies the estimate 
\begin{equation}
\label{eq:maindecay}
\big|D^jr_{p+1}(\ell)\big|\leq |\ell|^{-1-j-p}\log^{p+1}(|\ell|), \quad j = 1,...,J.
\end{equation}
\end{theorem}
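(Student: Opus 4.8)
The plan is to build the expansion \eqref{eq:exp_b} inductively in $p$, running a bootstrap between the continuum hierarchy \eqref{eq:uCiPDE} and the discrete equilibrium equation \eqref{eq:equil_edge}, exactly as in the point-defect analysis of \cite{braun2022asymptotic} but now accounting for the topological slip. The base case $p=0$ is the known decay $|D\bar u(\ell)| \lesssim |\ell|^{-d}\log|\ell|$ from \cite{chen19, EOS2016}, together with $u_0^{\rm C}$ defined by \eqref{eq:u0}--\eqref{eq:disl:defn_u0}: one writes $\bar u = u_0^{\rm C} + (\bar u - u_0^{\rm C})$ and checks that the corrector already lies in $\mathcal{H}^1$ with the stated first-order rate. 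For the inductive step, suppose the expansion holds up to order $p-1$ with remainder $r_p$ satisfying $|D^j r_p(\ell)| \lesssim |\ell|^{-j-p}\log^{p}(|\ell|)$. Insert $\bar u = \sum_{i\le p-1} u_i^{\rm C} + \sum_{i=1}^{p-1} b^{(i,k)}_{\rm exact}:D^i_{\mathcal S}\mathcal G_k + r_p$ into $\delta\mathcal E(\bar u)[v]=0$, linearise, and Taylor-expand the site potential $V$ to order $p+2$ (this is where $K \ge J+2+p$ is used, with $J$ the number of discrete derivatives we want to control in the final remainder estimate). The nonlinear remainder terms are higher order by the decay estimates \eqref{eq:uCestimate} and the inductive hypothesis; the linear part gives $H[r_p] = f_p + (\text{consistency error between } H \text{ and } H^{\rm C})$, where $f_p$ collects the continuum residuals.

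The heart of the argument is then to split this residual $f_p$ into two parts: (i) a smooth, slowly-decaying continuum part with an explicit asymptotic expansion in homogeneous-plus-log terms near infinity, which we cancel by \emph{defining} $u_p^{\rm C}$ as the decaying solution of $H^{\rm C}[u_p^{\rm C}] = \mathcal S_p(\tilde u_0^{\rm C},\dots,\tilde u_{p-1}^{\rm C})$ — existence, uniqueness, regularity and the decay rate \eqref{eq:uCestimate} for this solution must be established by the elliptic theory for the (singular at the core, degenerate-homogeneous at infinity) operator $H^{\rm C}$, via e.g.\ a Mellin transform / separation of variables in polar coordinates around $\hat x$, or by invoking the corresponding lemma from \cite{braun2022asymptotic, braun2025higher}; and (ii) a discrete, localised part supported near the core, which we expand in a discrete multipole series $\sum_k c^{(p,k)}:D^p_{\mathcal S}\delta_{0,\cdot} e_k + \dots$ using that $\{D^i_{\mathcal S}\}$, together with the lattice Green's function $\mathcal G$, reproduce the decaying lattice fields — here $b^{(p,k)}_{\rm exact}$ is read off as the appropriate force moment, and one uses the decay of $D^i_{\mathcal S}\mathcal G_k$ (which matches the $i$-th derivative of the continuum Green's function, $\sim |\ell|^{-d-i+1}$ with a possible log) to see that subtracting these terms improves the decay of the new remainder $r_{p+1}$ by one power of $|\ell|$ and adds at most one power of $\log|\ell|$. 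Applying $H^{-1}$ via the stable lattice Green's function and the decay of $\mathcal G$ then converts the pointwise/$\ell^2$ bound on $H[r_{p+1}]$ into the claimed estimate \eqref{eq:maindecay}; discrete regularity (summation by parts against $D^j$) upgrades it to all $j \le J$.

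The edge-dislocation case requires the extra ingredient that the naive predictor $u^{\rm lin}$ is not lattice-compatible across the branch cut $\Gamma$, so we work with the transformed predictor $u_0^{\rm C} = u^{\rm lin}\circ \xi^{-1}$ from \eqref{eq:disl:defn_u0}; one must verify that $\xi$ is a smooth diffeomorphism away from $\hat x$ equal to the identity outside $B_{\hat r}(\hat x)$, that $u_0^{\rm C}$ is $C^\infty$ in the (slipped) half-space and has the correct jump and decay, and that the slip-invariance assumption on $V$ (Appendix~\ref{app:V}) makes the energy difference functional \eqref{eq:disl:Ediff} well-defined and $C^{K}$; after this the screw and edge arguments are parallel, the only change being that $\mathcal S_i$ and the consistency errors pick up contributions from $\nabla\xi - I$, which are smooth and compactly supported and hence harmless to the far-field expansion.

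The main obstacle I expect is step (i): controlling the solvability and sharp decay of the higher-order continuum predictor equations $H^{\rm C}[u_p^{\rm C}] = \mathcal S_p$ when the right-hand side is only homogeneous-of-degree-$(-2-p)$-up-to-logs and singular at the core. One must simultaneously (a) peel off the non-decaying/resonant modes at infinity (which is where the logarithmic factors $\log^i$ enter and must be tracked exactly so the powers in \eqref{eq:maindecay} and \eqref{eq:uCestimate} come out right), and (b) ensure the core singularity of $\mathcal S_p$ — inherited from the $|\ell|^{-i}$ blow-up of $\nabla^i u_0^{\rm C}$ near $\hat x$ — does not obstruct the construction, which in the numerical part motivates the rescaled variables but in the theory is handled by a weighted elliptic estimate; matching the compatibility/orthogonality conditions (the vanishing resultant in \eqref{eq:u0}) with the moment definitions is the delicate bookkeeping that ties (i) and (ii) together.
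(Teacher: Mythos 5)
Your proposal follows essentially the same route as the paper: the paper's proof handles $p=0$ by showing the corrected edge predictor $u_0^{\rm C}=u^{\rm lin}\circ\xi^{-1}$ satisfies $\nabla^j u_0^{\rm C}=\nabla^j u^{\rm lin}+O(|\ell|^{-1-j})$ (via \cite[Lemma 3.1(iii)]{EOS2016}), so the screw-dislocation estimates carry over, and then delegates the entire inductive bootstrap for $p\ge 1$ — the residual splitting, the definition of $u_p^{\rm C}$ via $H^{\rm C}[u_p^{\rm C}]=\mathcal S_p$, and the multipole/force-moment extraction — to \cite[Section 7.2]{braun2022asymptotic}, which is precisely the machinery you reconstruct. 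The one step the paper actually writes out in full, the reduction of the edge case to the screw case through the properties of $\xi$, is correctly identified (if less explicitly quantified) in your third paragraph, so no gap.
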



\begin{remark}
While the decay estimate~\eqref{eq:maindecay} can be refined to eliminate logarithmic factors~\cite[Remark 7]{braun2022asymptotic}, we retain them to highlight the essential differences from the point defect case~\cite{braun2025higher}, and they have negligible impact on numerical implementation. For point defects, it was shown in~\cite{braun2025higher} that $u_i^{\rm C} = 0$ when $p \leq d$, and the far-field is entirely captured by discrete multipole terms. In contrast, dislocations require the inclusion of higher-order CLE predictors $u_i^{\rm C}$ in the expansion of $\bar{u}$. We employ a precise analytical expression for $u_0^{\rm C}$~\cite{anderson2017theory}. The case $i \geq 1$, however, introduces additional complexity: one must solve elliptic PDEs with singular sources near the core to high accuracy, and the accuracy of $u_i^{\rm C}$ should be carefully matched with that of the multipole coefficients \( b^{(i,k)} \) to ensure optimal error estimate (cf.~Theorem~\ref{th:galerkin}). This continuum–discrete coupling introduces both analytical and computational challenges, and marks a key departure from previous work.
\end{remark}

Theorem~\ref{thm:dislocation} establishes a low-rank decomposition of the discrete elastic far field induced by dislocations. As shown in~\eqref{eq:exp_b}, the equilibrium displacement $\bar{u}$ is expanded into continuum predictors $u_i^{\rm C}$ and discrete multipole terms $b^{(i,k)}_{\rm exact} : D^i_S \mathcal{G}_k$. This representation facilitates precise control over the residual $r_{p+1}$, whose improved regularity and decay underpin the construction of high-order boundary conditions—a central contribution of this work. The coefficients \( b^{(i,k)}_{\rm exact} \) are explicitly approximated from force-moment identities (cf.~\eqref{eq:bIrelation}), introduced in the next section by extending the method of~\cite{braun2025higher} to the dislocation setting.

\section{Accelerated Convergence of Cell Problem and its Numerical Approximations}
\label{sec:Scheme}

In this section, we first revisit a conventional Galerkin scheme for the cell problem and highlight its theoretically accelerated variant. We then introduce a numerical framework consisting of three components: solving higher-order predictor equations~\eqref{eq:uCiPDE}, moment iteration, and its continuous approximation. These form the basis for constructing higher-order boundary conditions with rigorous error control.

\subsection{Accelerated convergence of cell problem}
\label{sec:sub:acc}


Consider the dislocation equilibra stated in Theorem~\ref{thm:dislocation}. We define a family of restricted displacement spaces 
\begin{align}
\mathcal{W}_R &:= \big\{  v : \Lambda \to \mathbb{R}^N \,|\,
                    v(\ell) = 0 \text{ for $|\ell| > R$} \big\}, 
                  \\
\mathcal{U}_R &:= \big\{ u = u_0^{\rm C} + v \,|\, v \in \mathcal{W}_R \big\},
\end{align}
where atoms are clamped in their reference configurations outside a ball of radius $R$. 

We can then approximate \eqref{eq:equil_edge} using the Galerkin scheme: Find $\bar{u}_R \in \mathcal{U}_R$ such that
\begin{equation}
   \label{eq:cellp:galerkin}
   \delta \mathcal{E}(\bar{u}_R)[v] = 0 \qquad \forall v \in \mathcal{W}_R.
\end{equation} 

Under suitable stability conditions, it is demonstrated in~\cite{EOS2016} that for sufficiently large $R$, there exists a solution $\bar{u}_R$ that satisfies the explicit convergence rate  
\begin{equation} \label{eq:slow_convergence_cell}
   \| D\bar{u}_R - D\bar{u} \|_{\ell^2} \leq C R^{-1}\log(R).
\end{equation}
This rate follows directly from the decay estimate $\big|Dr_1(\ell)\big|\lesssim\lvert\ell\rvert^{-2}\log(\lvert\ell\rvert)$ (taking $p=0$ and $j=1$ in \eqref{eq:maindecay}). 



To accelerate the slow convergence of the cell problem \eqref{eq:cellp:galerkin}, we propose an improved far-field boundary condition based on multiple expansions~\cite{braun2022asymptotic}:
\begin{enumerate} 
  \item We replace the naive far-field {\em predictor}
   $\hat{u}_0=u_0^{\rm C}$ with the higher-order continuum {\em predictor}
\begin{equation}
\label{eq:predictor}
\hat{u}_p := \sum_{i = 0}^p u_i^{\rm C},
\end{equation}

where $u_i^{\rm C}$ can be obtained by solving higher-order CLE equations~\eqref{eq:uCiPDE}.

   \item Then, the admissible {\em corrector} space is enlarged with the multipole moments 
  \begin{align}\label{eq:exact_space_CLE}
    \mathcal{U}_{p,R} := \bigg\{
        v : \Lambda \to \mathbb{R}^N \,\Big|\, & \,\,
        v = \sum_{i = 1}^p \sum_{k = 1}^N b^{(i,k)} : D^i_{\mathcal{S}} \mathcal{G}_k
              + w, \nonumber \\[-1em] 
          & \text{for free coefficients $b^{(i,k)}$ and 
          $w \in  \mathcal{W}_R $
           }
    \bigg\},
  \end{align}
  where the {\em corrector} displacement is parameterised by its values in the domain $\Lambda \cap B_R$ and by the discrete coefficients $b^{(i,k)}$ of the multipole terms.
  \item We consider the pure Galerkin approximation: Find $v^*_{p,R}\in\mathcal{U}_{p,R}$, $u^*_{p,R} = \hat{u}_p + v^*_{p,R}$, such that 
\begin{equation} \label{eq:galerkin}
    \delta \mathcal{E}(u^{*}_{p, R}) [v_R] = 0 \qquad \forall v_R \in \mathcal{W}_R.   
\end{equation}
\end{enumerate}



The following theorem outlines the error estimates for the Galerkin approximation~\eqref{eq:galerkin}, focusing on both geometric error and energy error. We refer to~\cite[Section 7.2]{EOS2016} for the proof. It is important to compare the improved rate with the rate of the naive scheme~\eqref{eq:slow_convergence_cell} for $p\geq 1$.  

\begin{theorem}\label{th:galerkin}
  Suppose that $\bar{u}$ is a strongly stable solution of \eqref{eq:equil_edge}; that is, there exists a stability constant $c_0 > 0$ such that 
\begin{equation}
\label{eq:stabeq}
\delta^2 \mathcal{E}(\bar{u})[v,v] \geq c_0 \| Dv \|^2_{\ell^2}, \qquad \forall v \in \mathcal{H}^1(\Lambda),
\end{equation}
  then, for $R$ sufficiently large, there exists a solution $v^*_{p,R} \in \mathcal{U}_{p,R}$, $u^*_{p,R} = \hat{u}_p + v^*_{p,R}$ to~\eqref{eq:galerkin} with $b^{(i,k)} = b^{(i,k)}_{\rm exact}$ and such that 
\begin{align}\label{eq:thm3.1-1}
 \big\| D\bar{u} - D\bar{u}^{*}_{p,R} \big\|_{\ell^2}
 &\leq C_{\rm G} 
 R^{- 1 - p} \log^{p+1}(R), \\
 \big|\mathcal{E}(\bar{u})-\mathcal{E}(\bar{u}^*_{p, R}) \big| &\leq C_{\rm E} 
 R^{- 2 - 2p} \log^{2p+2}(R). 
\end{align}
\end{theorem}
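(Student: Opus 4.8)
The plan is to follow the Galerkin methodology of \cite{EOS2016,braun2022asymptotic}. First I would build a near-interpolant of $\bar u$ inside the augmented trial space directly from the multipole expansion \eqref{eq:exp_b} of Theorem~\ref{thm:dislocation}. Fix a smooth cut-off $\chi_R$ with $\chi_R\equiv 1$ on $B_{R/2}(\hat x)$, $\chi_R\equiv 0$ outside $B_R(\hat x)$ and $|\nabla^j\chi_R|\lesssim R^{-j}$, and put
\[
 u^{\rm trial}_R:=\hat u_p+\sum_{i=1}^p\sum_{k=1}^N b^{(i,k)}_{\rm exact}:D^i_{\mathcal S}\mathcal G_k+\chi_R\, r_{p+1}.
\]
Since the discrete multipole terms are admissible \emph{global} members of the corrector space \eqref{eq:exact_space_CLE} while $\chi_R r_{p+1}\in\mathcal W_R$, we have $u^{\rm trial}_R=\hat u_p+v^{\rm trial}_R$ with $v^{\rm trial}_R\in\mathcal U_{p,R}$, and by \eqref{eq:exp_b} the interpolation defect is exactly $\bar u-u^{\rm trial}_R=(1-\chi_R)r_{p+1}$.

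Next I would estimate this defect. Using the discrete Leibniz rule, $D[(1-\chi_R)r_{p+1}]$ splits into $(1-\chi_R)Dr_{p+1}$, supported in $\{|\ell|\gtrsim R\}$, and $(D\chi_R)\cdot(\text{shifts of }r_{p+1})$, supported in the annulus $B_R\setminus B_{R/2}$. For the first, \eqref{eq:maindecay} with $j=1$ gives $\sum_{|\ell|\gtrsim R}|\ell|^{-4-2p}\log^{2p+2}|\ell|\lesssim R^{-2-2p}\log^{2p+2}R$. For the second, $|D\chi_R|\lesssim R^{-1}$ and the pointwise bound $|r_{p+1}(\ell)|\lesssim|\ell|^{-1-p}\log^{p+1}|\ell|$ — obtained by summing the gradient bound \eqref{eq:maindecay} along a ray to infinity, using $r_{p+1}\to 0$ — yield the same order after summing over the annulus. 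Hence
\[
 \|D\bar u-Du^{\rm trial}_R\|_{\ell^2}\lesssim R^{-1-p}\log^{p+1}R,\qquad \|D\bar u-Du^{\rm trial}_R\|_{\ell^\infty}\lesssim R^{-1-p}\log^{p+1}R\to 0.
\]

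Then I would run a quasi-optimality argument. Consider the map $w\mapsto\delta\mathcal E\big(\hat u_p+\sum b^{(i,k)}_{\rm exact}:D^i_{\mathcal S}\mathcal G_k+w\big)|_{\mathcal W_R}$ on $\mathcal W_R$. Its residual at $w=\chi_R r_{p+1}$ equals $\delta\mathcal E(u^{\rm trial}_R)[\cdot]-\delta\mathcal E(\bar u)[\cdot]$ on $\mathcal W_R$ (because $\delta\mathcal E(\bar u)=0$ by \eqref{eq:equil_edge}), so by Lipschitz continuity of $\delta\mathcal E$ near the origin its dual norm is $\lesssim\|D\bar u-Du^{\rm trial}_R\|_{\ell^2}$. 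Since $\mathcal W_R\subset\mathcal H^1$, \eqref{eq:stabeq} gives $\delta^2\mathcal E(\bar u)[v,v]\geq c_0\|Dv\|^2_{\ell^2}$ on $\mathcal W_R$, and $|\delta^2\mathcal E(\bar u)[v,v]-\delta^2\mathcal E(u^{\rm trial}_R)[v,v]|\lesssim\sup_\ell|\nabla^2V(D\bar u(\ell))-\nabla^2V(Du^{\rm trial}_R(\ell))|\,\|Dv\|^2_{\ell^2}\leq\tfrac{c_0}{2}\|Dv\|^2_{\ell^2}$ for $R$ large, by the $\ell^\infty$ bound above and continuity of $\nabla^2V$; thus $\delta^2\mathcal E(u^{\rm trial}_R)$ is coercive on $\mathcal W_R$ uniformly in $R$. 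A quantitative inverse function theorem (cf.~\cite[Section 7.2]{EOS2016}) then furnishes, for $R$ large, a solution $w^*\in\mathcal W_R$ of \eqref{eq:galerkin} with $v^*_{p,R}=\sum b^{(i,k)}_{\rm exact}:D^i_{\mathcal S}\mathcal G_k+w^*$, $\bar u^*_{p,R}=\hat u_p+v^*_{p,R}$, and $\|D(\bar u^*_{p,R}-u^{\rm trial}_R)\|_{\ell^2}\lesssim\|D\bar u-Du^{\rm trial}_R\|_{\ell^2}$; the triangle inequality then gives the geometry estimate $\|D\bar u-D\bar u^*_{p,R}\|_{\ell^2}\leq C_{\rm G}R^{-1-p}\log^{p+1}R$. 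For the energy, write $e:=\bar u^*_{p,R}-\bar u=w^*-r_{p+1}\in\mathcal H^1$, so $\|De\|_{\ell^2}\lesssim R^{-1-p}\log^{p+1}R$ and $\|De\|_{\ell^\infty}\to 0$; since $\bar u$ is a critical point, $\delta\mathcal E(\bar u)[e]=0$ (extending \eqref{eq:equil_edge} by density), and a second-order Taylor expansion $\mathcal E(\bar u^*_{p,R})-\mathcal E(\bar u)=\int_0^1(1-t)\,\delta^2\mathcal E(\bar u+te)[e,e]\dt$ together with boundedness of $\delta^2\mathcal E(\bar u+te)$ (ensured by $\|De\|_{\ell^\infty}\to 0$) gives $|\mathcal E(\bar u^*_{p,R})-\mathcal E(\bar u)|\lesssim\|De\|^2_{\ell^2}\leq C_{\rm E}R^{-2-2p}\log^{2p+2}R$.

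The main obstacle is the stability transfer: strong stability is assumed only at $\bar u$ on the infinite-lattice space $\mathcal H^1$, and it must be propagated to the finite spaces $\mathcal W_R$ at the perturbed configuration $u^{\rm trial}_R$; this genuinely needs the $\ell^\infty$ (not merely $\ell^2$) closeness from the defect estimate, together with operator-norm continuity of $w\mapsto\delta^2\mathcal E(w)$, after which the quantitative inverse function theorem applies with a constant uniform in $R$. The remaining ingredients — the discrete product rule and the annulus/exterior lattice sums with the correct algebraic powers and logarithmic exponents — are routine. For edge dislocations, the extra complications due to the $\xi$-transformation in $u_0^{\rm C}$ are already absorbed into Theorem~\ref{thm:dislocation}, so the argument above is unchanged.
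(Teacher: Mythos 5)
Your argument is correct and is essentially the proof the paper relies on: the paper itself defers to \cite[Section 7.2]{EOS2016} (and \cite{braun2022asymptotic}), where the result is obtained by exactly this scheme — insert the truncated remainder $\chi_R r_{p+1}$ as a quasi-best approximation in the enlarged trial space, bound the consistency error via the decay \eqref{eq:maindecay} over the exterior and the cut-off annulus, transfer stability using the $\ell^\infty$ smallness of the defect, and conclude with the inverse function theorem plus a second-order Taylor expansion for the energy. The powers and logarithmic exponents in your lattice sums are the right ones, so no correction is needed.
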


The foregoing theorem is an important theoretical milestone, showcasing the accelerated convergence of cell problems that can in principle be achieved. However, the direct implementation of the scheme~\eqref{eq:galerkin} is not feasible, and numerical approximations must inevitably be introduced. In particular, the predictor equation must be solved numerically to approximate the far-field predictor. Special care is also required in handling the terms $b^{(i,k)}$, which should be approximated on a finite domain. Moreover, for computational efficiency, it is necessary to adopt a continuous formulation for their evaluation. We will provide a detailed numerical framework addressing these issues in the following section.

\subsection{Numerical approximations of the cell problems}
\label{sec:sub:approximation}

In this section, we introduce three numerical approximations involved in solving the cell problems~\eqref{eq:galerkin}, and analyze the associated errors. Our main result, Theorem~\ref{th:galerkinaM}, demonstrates that the numerical scheme achieves the same order of convergence as the theoretical result in Theorem~\ref{th:galerkin}, up to controllable truncation and numerical discretization errors.

\subsubsection{Predictor approximations}
\label{sec:sub:sub:pde}

The first step is to compute the higher-order continuum predictor \( \hat{u}_p \) for each \( u_i^{\rm C} \), \( i \geq 1 \), which satisfies a sequence of second-order elliptic equations~\eqref{eq:uCiPDE} posed on an infinite domain. To make this numerically tractable, we truncate the domain, introducing a controllable truncation error, and then discretize the equation, which incurs numerical error. 

To that end, an important observation that justifies the truncation of the infinite domain is the decay estimate for the predictor \( u_i^{\rm C} \) (cf.~\eqref{eq:uCestimate}).
This allows us to restrict the computation to a finite domain of radius \( R_{\rm c} \) with controllable error (see \eqref{eq:approx-pde} and Lemma~\ref{thm:pde} for details). 

After truncating to a finite domain, a remaining challenge is the singular behavior of \( u_i^{\rm C} \) near the dislocation core, which remains the primary obstacle to solving the higher-order predictor equations. To overcome this, we introduce the rescaled form \( u_i^{\rm C} = v_i \cdot r^i \), transforming the original problem into equations for \( v_i \). These are then solved using a spectral Galerkin method~\cite{2011Spectral}, as detailed in \S~\ref{sec:sub:pde}.

With these approximations in place, we obtain a numerical approximation of the higher-order predictor, denoted by \( \hat{u}^{\rm num}_{p} \). This can be incorporated into the following Galerkin scheme: find \( v^*_{p,R} \in \mathcal{U}_{p,R} \) such that \( \bar{u}^{\rm c}_{p,R} := \hat{u}^{\rm num}_{p} + v^*_{p,R} \) satisfies
\begin{equation}
\delta \mathcal{E}(\bar{u}^{\rm c}_{p,R}) [v_R] = 0 \qquad \forall v_R \in \mathcal{W}_{R}.
\end{equation}

From~\eqref{eq:predictor}, it follows that $\hat{u}^{\rm num}_{p}$ is obtained by summing $u_i^{\mathrm{C}}$ for $i = 0, \dots, p$. We now present an error estimate for the approximation of the continuous predictor solution $\bar{u}^{\rm c}_{p,R}$ by its numerical counterpart $u^*_{p,R}$, which is obtained by solving the elliptic equations~\eqref{eq:uCiPDE} numerically. This estimate follows directly from the error bounds for each $u_i^{\mathrm{C}}$ established in Lemma~\ref{thm:pde} in~\S~\ref{sec:sub:sub:spectral}:
\begin{equation}
\label{eq:approx-pde}
\big\| D u^*_{p,R} - D\bar{u}^{\rm c}_{p,R} \big\|_{\ell^2} \lesssim C_p \left(R_{\rm c}^{-1} \log(R_{\rm c}) + e^{-c{N_{\rm pde}}}\right),
\end{equation}
where \( N_{\rm pde} \) denotes the number of degrees of freedom used to solve the predictor equations, and \( C_p > 0 \) is a constant independent of \( N_{\rm pde} \).

From the error estimate~\eqref{eq:approx-pde}, we observe that once a sufficiently large computational domain radius \( R_{\rm c} \) is chosen, the truncation error becomes negligible. Since the predictor equations are solved only once, the associated computational cost is minimal—this is further supported by our numerical efficiency analysis in \S~\ref{sec:sub:conv}, which shows that this cost is negligible compared to solving the cell problem. Moreover, thanks to the spectral accuracy of the numerical scheme, we can achieve highly accurate approximations of the higher-order continuum predictors. This capability forms a central motivation for the methodology developed in this work.

\subsubsection{Multipole moment approximations}
\label{sec:sub:moment}

Next, we approximate the multipole moment terms \( b^{(i,k)}_{\rm exact} \), which are originally defined on the infinite domain. To make this tractable, we adopt the moment iteration strategy developed for point defects~\cite[Sec. 3]{braun2025higher} and adapt it to our setting. Specifically, we fix the multipole tensors in the corrector space, resulting in the following approximate corrector space:
\begin{align}
\label{eq:u_bcR}
\mathcal{U}_{p,R}^{\rm (b)} := \bigg\{
v : \Lambda \to \mathbb{R}^N \,\bigg|\, \,\,
v = \sum_{i = 1}^p &\sum_{k = 1}^N b^{(i,k)} : D^i_{\mathcal{S}} \mathcal{G}_k
      + w, \nonumber \\[-0.5em] 
 & \text{for fixed } b \text{ and } w \in \mathcal{W}_R 
\bigg\}.
\end{align}
Here, the tensors \( b^{(i,k)} \) are chosen as approximations to the exact moments \( b^{(i,k)}_{\rm exact} \), which involves evaluating the multipole tensors within a finite domain.

We now consider the corresponding Galerkin approximation: find \( \bar{v}_{p,R}^{\rm b} \in \mathcal{U}^{({\rm b})}_{p,R} \), set \( \bar{u}_{p,R}^{\rm b} = \hat{u}^{\rm num}_{p} + \bar{v}_{p,R}^{\rm b} \), such that
\begin{equation} \label{eq:galerkin_approx}
    \delta \mathcal{E}(\bar{u}_{p,R}^{\rm b}) [v_{R}] = 0 \qquad \forall v_{R} \in \mathcal{W}_{R}.
\end{equation}
The resulting approximation error arises solely from the moment iteration used to estimate \( b^{(i,k)}_{\rm exact} \). While we omit a detailed discussion here, the iterative scheme and supporting analysis are provided in Appendix~\ref{subsec:moment iteration}. For completeness, we state the resulting error estimate:
\begin{equation}
\label{eq:approx-b-R}
\big\| D \bar{u}_{p,R}^{\rm c}  - D \bar{u}_{p,R}^{\rm b} \big\|_{\ell^2}
\lesssim R^{-1-p}\log^{p+1}(R).
\end{equation}

\subsubsection{Continuous multipole expansion}
\label{sec:sub:coeffts}

To avoid the complexity of discrete Green's functions and their derivatives, we adopt a continuous approximation proposed in~\cite{braun2022asymptotic}. By leveraging the connection between the discrete Green's function \( \mathcal{G} \) and its continuum counterpart \( G \), we derive the continuous multipole moment tensors, which define the following approximate corrector space:
\begin{align}
\label{eq:u_acR}
\mathcal{U}_{p,R}^{\rm (a)} := \bigg\{
v : \Lambda \to \mathbb{R}^N \,\bigg|\, \,\,
v = \sum_{k = 1}^N&\sum_{n=0}^{\lfloor\frac{p-1}{2}\rfloor}\sum_{i = 1}^{p-2n} a^{(i,n,k)} : \nabla^i(G)_{\cdot k}
+ w, \nonumber \\[-0.5em] 
 & \text{for fixed } a \text{ and } w \in \mathcal{W}_R 
\bigg\},
\end{align}
where the first term, corresponding to the continuous multipole (CMP), also enters the construction of $\tilde{u}_i^{\rm C}$ in~\eqref{eq:uCiPDE}.

We then consider the corresponding Galerkin scheme: find \( \bar{v}_{p,R} \in \mathcal{U}^{\rm (a)}_{p,R} \) and set \( \bar{u}_{p,R} = \hat{u}^{\rm num}_{p} + \bar{v}_{p,R} \) such that
\begin{equation} \label{eq:galerkin_approx_a}
\delta \mathcal{E}(\bar{u}_{p,R}) [v_{R}] = 0 \qquad \forall v_{R} \in \mathcal{W}_{R}. 
\end{equation}
The resulting approximation error from replacing the discrete multipole expansion with its continuous counterpart is estimated as (see Appendix~\ref{app:sub:a} for details)
\begin{equation}
\label{eq:approx-a}
\big\| D \bar{u}_{p,R}^{\rm b} - D \bar{u}_{p,R} \big\|_{\ell^2} \lesssim R^{-1-p}.
\end{equation}

We now rigorously quantify the three sources of numerical approximation errors discussed above. The following theorem establishes convergence estimates for the Galerkin approximation~\eqref{eq:galerkin_approx_a}, measured in both the discrete energy norm and the geometry error norm. This theorem quantifies the total error resulting from the combined effect of the three approximation steps described above. A detailed proof is provided in Appendix~\ref{app:proof-of-thm3.2}.

\begin{theorem}\label{th:galerkinaM}
Suppose that $\bar{u}$ is a strongly stable solution of \eqref{eq:equil_edge}, 
for sufficiently large radius $R$, there exists a corrector $\bar{u}_{p,R} = \hat{u}^{\rm num}_{p} + \bar{v}_{p,R}$ with $\bar{v}_{p,R} \in \mathcal{U}_{p,R}^{\rm (a)}$ solving \eqref{eq:galerkin_approx_a} and $C_g = {\rm max}\{C_G, C_p\}$, $C_e = {\rm max}\{C_E, C_p\}$ such that 
 \begin{align}
     \lVert D\bar{u} - D \bar{u}_{p,R} \rVert_{\ell^2}
     &\leq C_g \big(R^{-1-p}\log^{p+1}(R)+ R_{\rm c}^{-1} \log(R_{\rm c}) +e^{-cN_{\rm pde}}\big), \qquad \text{and} \\
     \big|\mathcal{E}(\bar{u})-\mathcal{E}( \bar{u}_{p,R} )\big| &\leq C_e\big(R^{-2-2p} \log^{2p+2}(R)+ R_{\rm c}^{-1} \log(R_{\rm c}) + e^{-cN_{\rm pde}}\big).
 \end{align}

\end{theorem}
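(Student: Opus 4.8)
The plan is to combine the abstract error estimate of Theorem~\ref{th:galerkin} with the three perturbation estimates \eqref{eq:approx-pde}, \eqref{eq:approx-b-R} and \eqref{eq:approx-a} via a triangle inequality, and then bootstrap from the geometry error to the energy error. More precisely, I would introduce the chain of approximate solutions
\[
\bar u \;\rightsquigarrow\; \bar u^{*}_{p,R}\;\rightsquigarrow\; \bar u^{\rm c}_{p,R}\;\rightsquigarrow\; \bar u^{\rm b}_{p,R}\;\rightsquigarrow\;\bar u_{p,R},
\]
where $\bar u^{*}_{p,R}$ is the idealised Galerkin solution of \eqref{eq:galerkin} (with $b^{(i,k)}=b^{(i,k)}_{\rm exact}$ and the exact predictor $\hat u_p$), $\bar u^{\rm c}_{p,R}=\hat u^{\rm num}_p+v^{*}_{p,R}$ replaces the exact predictor by its spectral/truncated numerical version, $\bar u^{\rm b}_{p,R}$ additionally freezes the discrete multipole coefficients at finite-domain approximations of $b^{(i,k)}_{\rm exact}$, and $\bar u_{p,R}$ finally replaces the discrete multipole expansion by the continuous one of \eqref{eq:u_acR}. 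For the geometry error I would then write
\[
\|D\bar u-D\bar u_{p,R}\|_{\ell^2}
\le
\|D\bar u-D\bar u^{*}_{p,R}\|_{\ell^2}
+\|D\bar u^{*}_{p,R}-D\bar u^{\rm c}_{p,R}\|_{\ell^2}
+\|D\bar u^{\rm c}_{p,R}-D\bar u^{\rm b}_{p,R}\|_{\ell^2}
+\|D\bar u^{\rm b}_{p,R}-D\bar u_{p,R}\|_{\ell^2},
\]
and bound the four terms by Theorem~\ref{th:galerkin}, \eqref{eq:approx-pde}, \eqref{eq:approx-b-R} and \eqref{eq:approx-a} respectively, yielding $C_{\rm G}R^{-1-p}\log^{p+1}(R)+C_p(R_{\rm c}^{-1}\log R_{\rm c}+e^{-cN_{\rm pde}})+R^{-1-p}\log^{p+1}(R)+R^{-1-p}$, which absorbs into $C_g\big(R^{-1-p}\log^{p+1}(R)+R_{\rm c}^{-1}\log R_{\rm c}+e^{-cN_{\rm pde}}\big)$ after renaming constants.

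The one genuinely non-routine point is that the intermediate objects $\bar u^{\rm c}_{p,R}$, $\bar u^{\rm b}_{p,R}$ must actually \emph{exist} as solutions of their respective Galerkin problems \eqref{eq:galerkin_approx}, \eqref{eq:galerkin_approx_a}, and the perturbation estimates \eqref{eq:approx-pde}--\eqref{eq:approx-a} be valid, for the \emph{same} range of $R$. This is a standard inverse-function-theorem / Lipschitz-stability argument: strong stability \eqref{eq:stabeq} of $\bar u$ transfers, for $R$ large, to uniform stability of $\delta^2\mathcal E$ at each preceding approximate solution (since each is $o(1)$-close to $\bar u$ in $\|D\cdot\|_{\ell^2}$ and $\delta^2\mathcal E$ is Lipschitz by $V\in C^K$, $K\ge 4$); then a quantitative implicit-function argument produces the next approximate solution together with the stated distance bound. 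I would therefore first record this stability-transfer lemma (or cite the corresponding statement behind \eqref{eq:approx-pde}--\eqref{eq:approx-a} in Appendices~\ref{subsec:moment iteration} and~\ref{app:sub:a}), proceeding inductively along the chain so that the hypotheses of each successive estimate are met.

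For the energy error I would use the standard second-order Taylor/variational identity: since $\bar u$ is an equilibrium, $\delta\mathcal E(\bar u)=0$, so
\[
\mathcal E(\bar u_{p,R})-\mathcal E(\bar u)
=\tfrac12\,\delta^2\mathcal E(\bar u)[\bar u_{p,R}-\bar u,\bar u_{p,R}-\bar u]+o\big(\|D\bar u_{p,R}-D\bar u\|_{\ell^2}^2\big),
\]
so that $|\mathcal E(\bar u)-\mathcal E(\bar u_{p,R})|\lesssim \|D\bar u-D\bar u_{p,R}\|_{\ell^2}^2$ plus the error incurred because $\bar u_{p,R}$ is only an approximate (not exact) equilibrium; the latter is controlled by pairing the residual $\delta\mathcal E(\bar u_{p,R})$ against $\bar u_{p,R}-\bar u$ and using that the residual is small in the same three parameters. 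Squaring the geometry bound turns $R^{-1-p}\log^{p+1}(R)$ into $R^{-2-2p}\log^{2p+2}(R)$, matching Theorem~\ref{th:galerkin}; the truncation and spectral contributions $R_{\rm c}^{-1}\log R_{\rm c}+e^{-cN_{\rm pde}}$ enter only linearly (they are not squared because they come from the residual term, not the quadratic form), which is exactly why the energy bound in the statement retains these terms to first power rather than second. Collecting the constants as $C_e=\max\{C_E,C_p\}$ completes the proof. The main obstacle is purely the bookkeeping of the stability transfer along the four-term chain; everything after that is triangle inequality and a Taylor expansion.
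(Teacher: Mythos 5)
Your proposal follows essentially the same route as the paper: the identical four-term triangle-inequality chain $\bar u \rightsquigarrow \bar u^{*}_{p,R} \rightsquigarrow \bar u^{\rm c}_{p,R} \rightsquigarrow \bar u^{\rm b}_{p,R} \rightsquigarrow \bar u_{p,R}$, with the four terms bounded by Theorem~\ref{th:galerkin}, \eqref{eq:approx-pde}, \eqref{eq:approx-b-R} and \eqref{eq:approx-a}, and the energy error deduced from the geometry error. The only cosmetic difference is in the energy step: the paper simply invokes $|\mathcal{E}(\bar u)-\mathcal{E}(\bar u_{p,R})|\lesssim \|D\bar u - D\bar u_{p,R}\|_{\ell^2}^2$ (which uses only $\delta\mathcal{E}(\bar u)=0$ and Lipschitz continuity of $\delta\mathcal{E}$, and already implies the stated bound since the squared truncation and spectral terms are dominated by their first powers), so your more elaborate residual-pairing explanation for why those terms appear linearly is not needed.
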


This theorem establishes that the numerical scheme retains the same convergence rate as the theoretical result in Theorem~\ref{th:galerkin}, up to controllable truncation and discretization errors. Numerical validation will be presented in \S~\ref{sec:numexp}.

\section{Numerical Algorithms}
\label{sec:pde}

In this section, we provide a more detailed discussion of the numerical solution of the higher-order continuum equations, along with the construction of the main algorithm (cf.~Algorithm~\ref{alg:moment_iter_a}).

\subsection{Solving higher-order predictor equations}
\label{sec:sub:pde}

To solve the higher-order predictor equations~\eqref{eq:uCiPDE}, we construct the modified continuum predictor
\begin{equation}
\label{eq:uiCMP_2}
\tilde{u}_i^{\rm C} := u_i^{\rm C} + u_i^{\rm CMP}, \qquad
u_i^{\rm CMP} := \sum_{k = 1}^N \sum_{n = 0}^{\lfloor \frac{p - 1}{2} \rfloor} \sum_{j = 1}^{p - 2n} a^{(j,n,k)} : \nabla^j(G)_{\cdot k},
\end{equation}
where \( u_i^{\rm C} \) is the solution of the higher-order CLE problem for \( i \geq 1 \), and the correction term \( u_i^{\rm CMP} \) is constructed from the CMP basis~\eqref{eq:u_acR}.

The right-hand sides (i.e., the higher-order residual forces) of the first three equations of~\eqref{eq:uCiPDE} (derived in detail in~\cite[Section 7]{braun2022asymptotic}) are given by
\begin{equation}
\begin{aligned}
\mathcal{S}_0 &= 0, \\
\mathcal{S}_1(u_0^{\rm C}) &= \frac{1}{2} \divo\big(\nabla^3 W(0) [\nabla u_0^{\rm C}]^2 \big), \\
\mathcal{S}_2(\tilde{u}_0^{\rm C}, \tilde{u}_1^{\rm C}) &= \divo\big(\nabla^3 W(0) [\nabla u_0^{\rm C}, \nabla u_1^{\rm C} + \nabla u_1^{\rm CMP}] \big) \\
&\quad + \frac{1}{6} \divo\big(\nabla^4 W(0) [\nabla u_0^{\rm C}]^3 \big) - \widetilde{H}[u_0^{\rm C}],
\end{aligned}
\end{equation}
where the nonlocal correction operator \( \widetilde{H} \) is defined as
\begin{equation*}
\widetilde{H}[u] := \frac{1}{12 c_{\rm vol}} \sum_{\sigma, \rho \in \mathcal{R}} \nabla^2 V(0)_{\sigma \rho} \Big(
  3 \nabla^4 u[\sigma, \sigma, \rho, \rho]
  - 2 \nabla^4 u[\sigma, \rho, \rho, \rho]
  - 2 \nabla^4 u[\sigma, \sigma, \sigma, \rho]
\Big),
\end{equation*}
with $c_{\rm vol}:=|\det(A)|$. We restrict attention to these first three equations, as they suffice for our numerical implementation. Higher-order terms follow similar structure and are omitted for brevity.

Recall the finite domain approximation (cf.~Sec.~\ref{sec:sub:sub:pde}) and the corrector equations~\eqref{eq:uCiPDE}, the equations for \( u^{\rm C}_i \) ($i=1,2$) can be explicitly written as
\begin{equation}
\label{u1-pde}
\begin{aligned}
\begin{cases}
-\divo(\mathbb{C}[\nabla u_i^{\rm C}]) = g_i & \text{in } B_{R_{\rm c}}, \\
u_i^{\rm C} = 0 & \text{on } \partial B_{R_{\rm c}},
\end{cases}
\end{aligned}
\end{equation}
where the right-hand sides \( g_i \) are given by
\begin{equation}
\label{eq:gi}
g_i =
\begin{cases}
\displaystyle
\frac{1}{2} \divo\big( \nabla^3 W(0)[\nabla u_0^{\rm C}, \nabla u_0^{\rm C}] \big), & i = 1, \\[1.2em]
\displaystyle
\divo\big( \nabla^3 W(0)[\nabla u_0^{\rm C}, \nabla \tilde{u}_1^{\rm C}] \big)
+ \frac{1}{6} \divo\big( \nabla^4 W(0)[\nabla u_0^{\rm C}]^3 \big) 
- \widetilde{H}[u_0^{\rm C}], & i = 2,
\end{cases}
\end{equation}
and \( W(0) \) denotes the Cauchy--Born energy density evaluated at the reference configuration.
We remark that the system~\eqref{u1-pde} is solved with homogeneous Dirichlet boundary conditions for simplicity. Alternative choices, such as periodic boundary conditions, are also possible and have been explored in~\cite{cai2003periodic}, though they require additional technical treatment. 

\subsubsection{Removing the singularities at $r=0$}
\label{sec:remove-singularity}

To solve~\eqref{u1-pde} with high accuracy, we adopt a spectral method in polar coordinates over the computational domain \( B_{R_{\rm c}} \), which we denote as \( \Omega_{\rm c} := (0, R_{\rm c}) \times [0, 2\pi) \). Since the PDE form is preserved under the coordinate transformation, we retain the notation \( u_i^{\rm C} \) and \( u_i^{\rm CMP} \) in polar coordinates. The displacement fields \( u_i^{\rm C} \) satisfy the decay estimate of~\eqref{eq:uCestimate} in polar coordinates, i.e., $|\nabla^j u^{\rm C}_i(r)| \lesssim C |r|^{-i-j} \log |r|$.

To address the singularity at $r = 0$, we introduce the rescaled variables
\begin{equation}
\label{eq:rescale}
\left\{
\begin{aligned}
v_1 &= r \cdot u_1^{\rm C}, \\
v_0 &= r \cdot \nabla u_0^{\rm C},
\end{aligned}
\right. ~~{\rm and}~~
\left\{
\begin{aligned}
v_2 &= r^2 \cdot u_2^{\rm C}, \\
v_1 &= r \cdot u_1^{\rm C}, \\
v_1^{\rm CMP} &= r \cdot u_1^{\rm CMP}.
\end{aligned}
\right.
\end{equation}
Expanding the divergence terms in~\eqref{u1-pde} and \eqref{eq:gi} yields the following general form:
\begin{equation} 
\label{eq:u1-pde-r}
\begin{aligned}
-r^2 \divo\left( \mathbb{C} \nabla v_i \right) + a_ir\mathbb{C}\nabla v_i  \hat{r} + b_i\mathbb{C}v_i\hat{r}^2  &= f_i ,&(r,\theta) \in \Omega_{\rm c}, 
\end{aligned}
\end{equation}
where $a_1 = 2, a_2 = 4,b_1 = -2, b_2 = -6$, $\hat{r}$ is the unit vector in polar coordinates and 
\begin{equation}
\label{eq:fi_cases}
f_i = 
\begin{cases}
\displaystyle
\frac{r}{2} \divo\big( \nabla^3 W(0)[v_0, v_0] \big)
- \nabla^3 W(0)[v_0, v_0] \, \hat{r}, & i = 1, \\[1.2em]

\displaystyle
r^2 \divo\big( \nabla^3 W(0)[v_0, \nabla v_1 + \nabla v_1^{\rm CMP}] \big)
- r \divo\big( \nabla^3 W(0)[v_0, v_1 + v_1^{\rm CMP}] \big) \\[0.5em]
\displaystyle\quad
+ \frac{r}{6} \divo\big( \nabla^4 W(0)[v_0]^3 \big)
- 2r \nabla^3 W(0)[v_0, \nabla v_0 + \nabla v_1^{\rm CMP}] \, \hat{r} \\[0.5em]
\displaystyle\quad
- \frac{1}{2} \nabla^4 W(0)[v_0]^3 \, \hat{r}
+ 3 \nabla^3 W(0)[v_0, v_1 + v_1^{\rm CMP}] \, \hat{r}^2
+ r^4 \widehat{H}[v_0], & i = 2.
\end{cases},
\end{equation}
where $\widehat{H}[v_0]$ is derived from $\widetilde{H}[u_0^{\rm C}]$ in~\eqref{eq:gi} via the transformation in~\eqref{eq:rescale}.


Based on the analytic regularity theory for linear elliptic systems~\cite{morrey1957analyticity}, we establish the following result for the solution \( v_i \) to~\eqref{eq:u1-pde-r}:
\begin{lemma}
\label{lem:analytic_solution}
Let the coefficient tensor \( \mathbb{C} = \nabla^2 W(0) \) be positive definite, and assume the source term \( f_i \) is analytic in \( \Omega_{\rm c} \). Then the system~\eqref{eq:u1-pde-r} admits a unique solution \( v_i \), which is analytic in \( \Omega_{\rm c} \).
\end{lemma}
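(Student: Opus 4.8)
The plan is to reduce the claim to a standard interior analyticity result for linear elliptic systems with analytic coefficients and analytic right-hand side, as in Morrey~\cite{morrey1957analyticity}, after first dealing with the two structural subtleties of~\eqref{eq:u1-pde-r}: (i) the equation is posed on the degenerate polar rectangle $\Omega_{\rm c} = (0,R_{\rm c})\times[0,2\pi)$ with apparent singular coefficients as $r\to 0$, and (ii) we must establish both existence/uniqueness and analyticity. I would handle existence and uniqueness first and analyticity second.

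\emph{Step 1 (well-posedness).} Undo the rescaling~\eqref{eq:rescale}: a solution $v_i$ of~\eqref{eq:u1-pde-r} corresponds to $u_i^{\rm C} = r^{-i} v_i$ solving~\eqref{u1-pde}, namely $-\divo(\mathbb{C}[\nabla u_i^{\rm C}]) = g_i$ on $B_{R_{\rm c}}$ with homogeneous Dirichlet data. Positive definiteness of $\mathbb{C} = \nabla^2 W(0)$ gives Gårding/coercivity of the bilinear form $\int_{B_{R_{\rm c}}} \mathbb{C}[\nabla u]:\nabla v$ on $H^1_0(B_{R_{\rm c}};\mathbb{R}^N)$, so Lax--Milgram yields a unique weak solution $u_i^{\rm C} \in H^1_0$; transporting back through $v_i = r^i u_i^{\rm C}$ gives the unique $v_i$. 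One must check that $g_i$ (equivalently $f_i$) is at least in $H^{-1}$ — this is immediate from the analyticity hypothesis on $f_i$ on the (closed) computational domain, which in particular makes it bounded. Note the recursion: $f_1$ depends on $v_0 = r\nabla u_0^{\rm C}$, which is analytic away from $r=0$ by classical elliptic regularity for~\eqref{eq:u0}, and $f_2$ depends additionally on $v_1$, $v_1^{\rm CMP}$; so the analyticity of the source is itself established inductively from the analyticity of the lower-order predictors, which the lemma takes as its hypothesis ("assume the source term $f_i$ is analytic").

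\emph{Step 2 (analyticity).} With $u_i^{\rm C} \in H^1$ in hand, I would invoke the interior analyticity theorem for second-order linear elliptic systems in divergence form with constant (hence analytic) leading coefficient $\mathbb{C}$ and analytic right-hand side $g_i$: in any subdomain compactly contained in $B_{R_{\rm c}}\setminus\{0\}$, bootstrapping via Schauder/Morrey estimates gives $u_i^{\rm C}\in C^\omega$, hence $v_i = r^i u_i^{\rm C}$ is analytic there (the map $(r,\theta)\mapsto r^i$ and the polar-to-Cartesian change of variables are both real-analytic away from the origin). Near $r = 0$ the rescaling was chosen precisely so that $v_i$ extends analytically: by the decay estimate $|\nabla^j u_i^{\rm C}(r)| \lesssim |r|^{-i-j}\log|r|$ from~\eqref{eq:uCestimate}, the rescaled variable $v_i = r^i u_i^{\rm C}$ and its derivatives remain bounded up to logarithmic factors as $r\to 0$, and in fact~\eqref{eq:u1-pde-r} has analytic coefficients in $r$ across $r=0$ once multiplied through by $r^2$; combined with the analytic source $f_i$, Morrey's theorem applied in a full neighborhood of the origin (in the Cartesian picture, where the degeneracy disappears) closes the argument. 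Uniqueness in the analytic class then follows a fortiori from Step 1.

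\emph{Main obstacle.} The delicate point is the behavior at $r = 0$: one must argue that the rescaled system~\eqref{eq:u1-pde-r}, which superficially looks like a degenerate elliptic equation on the polar rectangle, is genuinely a non-degenerate elliptic system for $u_i^{\rm C}$ on the disk $B_{R_{\rm c}}$ (the $r$-weights are artifacts of polar coordinates), so that Morrey's interior analyticity applies at the origin just as at any other interior point. Making this transition between the polar formulation~\eqref{eq:u1-pde-r} and the Cartesian formulation~\eqref{u1-pde} rigorous — in particular verifying that "$f_i$ analytic in $\Omega_{\rm c}$" corresponds to "$g_i$ analytic in $B_{R_{\rm c}}$" including across $r=0$, which uses the specific form of $f_i$ in~\eqref{eq:fi_cases} and the decay~\eqref{eq:uCestimate} — is where the real work lies; the remaining elliptic-regularity bootstrapping is classical.
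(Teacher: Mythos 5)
Your overall strategy---interior analyticity for elliptic systems with constant coefficients and analytic data, \emph{à la} Morrey---is exactly what the paper relies on (the paper in fact offers no proof beyond citing \cite{morrey1957analyticity}), and your Step 2 away from the origin is the correct and essentially complete argument for analyticity on the open set $\Omega_{\rm c}=(0,R_{\rm c})\times[0,2\pi)$, which excludes $r=0$. However, your Step 1 has a genuine gap. Undoing the rescaling and applying Lax--Milgram on $H^1_0(B_{R_{\rm c}})$ does not work: the predictor $u_1^{\rm C}$ behaves like $r^{-1}\log r$ near the core (this is the near-field counterpart of \eqref{eq:uCestimate}, which is precisely why the rescaling \eqref{eq:rescale} is introduced), so $\nabla u_1^{\rm C}\sim r^{-2}\log r$ is not square-integrable on $B_{R_{\rm c}}$ and the intended solution of \eqref{u1-pde} does not lie in $H^1_0(B_{R_{\rm c}})$. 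Likewise the source $g_1=\tfrac12\divo\bigl(\nabla^3W(0)[\nabla u_0^{\rm C}]^2\bigr)$ is the divergence of a field of size $r^{-2}$, which is not in $L^2(B_{R_{\rm c}})$, so $g_1\notin H^{-1}(B_{R_{\rm c}})$ and the coercive bilinear form has nothing to act on. Well-posedness must instead be argued in the rescaled variables (e.g.\ mode-by-mode for the ODE system \eqref{eq:v1-in-pole}, or in weighted Sobolev spaces), or taken from the explicit construction of $u_i^{\rm C}$ with prescribed singular behaviour at the core in \cite{braun2022asymptotic}; your claim that "uniqueness in the analytic class follows a fortiori from Step 1" therefore currently rests on nothing.

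A second, smaller point: your assertion that $v_i$ "extends analytically" across $r=0$ overreaches. Since $v_1=r\,u_1^{\rm C}\sim\log r$, the rescaled variable is not analytic---indeed not even bounded---at the origin; the logarithmic factors you mention are not harmless decorations but a genuine obstruction to analytic continuation through $r=0$. Fortunately the lemma only asserts analyticity on the punctured domain, so this part of your argument can simply be deleted rather than repaired; but as written it would mislead a reader into thinking the spectral approximation enjoys analyticity up to $r=0$, which is not what the lemma (or the subsequent error analysis in Lemma~\ref{thm:pde}) actually uses. Relatedly, note that analyticity of $f_i$ on the open set $\Omega_{\rm c}$ does not imply boundedness, so the parenthetical "which in particular makes it bounded" needs the explicit cancellation you can read off from \eqref{eq:fi_cases} (e.g.\ $v_0=r\nabla u_0^{\rm C}=O(1)$) rather than the analyticity hypothesis.
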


The analyticity of \( v_i \) allows the application of spectral Galerkin methods to solve~\eqref{eq:u1-pde-r} with high-order accuracy.

\subsubsection{Spectral Galerkin approximation}
\label{sec:sub:sub:spectral}

For a cut-off parameter $M > 0$, let $\theta_j = \{j\pi/M\}_{j=0}^{2M-1}$ denote the standard Fourier collocation points. We expand the source term $f_i$ and the solution $v_i$ in truncated Fourier series as
\begin{equation}
\label{eq:fourier-vi-fi}
\begin{aligned}
f_{i,M}(r, \theta_j) &= \sum_{m=0}^{M} \big( f_{i,1m}(r) \cos(m \theta_j) + f_{i,2m}(r) \sin(m \theta_j) \big), \\[-0.2em]
v_{i,M}(r, \theta_j) &= \sum_{m=0}^{M} \big( v_{i,1m}(r) \cos(m \theta_j) + v_{i,2m}(r) \sin(m \theta_j) \big).
\end{aligned}
\end{equation}

Substituting the above expansion into \eqref{eq:u1-pde-r}, and collecting the cosine and sine modes, we obtain for each $m = 0, 1, \dots, M$:
\begin{equation}
\label{eq:v1-in-pole}
\begin{aligned}
-r^2\widetilde{C}_1 v_{i,1m}'' + \widetilde{C}_1 r v_{i,1m}' - \widetilde{C}_2 r v_{i,2m}' - \widetilde{C}_3 v_{i,1m} + \widetilde{C}_2 v_{i,2m} &= f_{i,1m}, \\
-r^2\widetilde{C}_1 v_{i,2m}'' + \widetilde{C}_1 r v_{i,2m}' + \widetilde{C}_2 r v_{i,1m}' - \widetilde{C}_3 v_{i,2m} - \widetilde{C}_2 v_{i,1m} &= f_{i,2m},
\end{aligned}
\end{equation}
subject to the boundary conditions:
\[
v_{i,1m}(0) = v_{i,2m}(0) = 0 \quad \text{if } m \neq 0, \qquad
v_{i,1m}(R) = v_{i,2m}(R) = 0.
\]
In line with~\cite{2011Spectral}, no additional pole conditions are imposed at \( m = 0 \), as their implementation is nontrivial and may reduce accuracy in certain cases (e.g., when the exact solution depends on \( r - 1 \)). The constant matrices \( \widetilde{C}_1, \widetilde{C}_2, \widetilde{C}_3 \) are defined as
\begin{equation*}
\widetilde{C}_1 = \begin{bmatrix}
C_{11} & 0 \\
0 & C_{44}
\end{bmatrix}, \quad
\widetilde{C}_2 = m(C_{12} + C_{44}) I, \quad
\widetilde{C}_3 = (1 - m^2)(C_{11} + C_{44}) I,
\end{equation*}
where \( C_{11}, C_{12}, C_{44} \) are the three independent elastic constants of the cubic crystal system~\cite{haussuhl2008physical}, and are components of the elasticity tensor \( \mathbb{C} \).

Following~\cite{2011Spectral}, we apply the coordinate transformation \( r = (R_{\rm c} + \zeta)/2 \) in~\eqref{eq:v1-in-pole}, mapping the domain to \( D = (-R_{\rm c}, R_{\rm c}) \). For notational convenience, we define \( v(\zeta) := v((R_{\rm c} + \zeta)/2) \) and \( g_i(\zeta) := 4 f_i((R_{\rm c} + \zeta)/2) \) for \( i = 1,2 \), resulting in
\begin{equation}
\label{eq:coordinate transformation}
\begin{aligned}
-\widetilde{C}_1(R_{\rm c}+\zeta)^2\frac{\partial^{2}v_1}{\partial \zeta^{2}} + 2\widetilde{C}_1(R_{\rm c}+\zeta)\frac{\partial v_1}{\partial \zeta} -2\widetilde{C}_2(R_{\rm c}+\zeta)\frac{\partial v_2}{\partial \zeta} -  2\widetilde{C}_3v_1 + 2\widetilde{C}_2v_2 &= g_1, \\
-\widetilde{C}_1(R_{\rm c}+\zeta)^2\frac{\partial^{2}v_2}{\partial \zeta^{2}} + 2\widetilde{C}_1(R_{\rm c}+\zeta)\frac{\partial v_2}{\partial \zeta} +2\widetilde{C}_2(R_{\rm c}+\zeta)\frac{\partial v_1}{\partial \zeta} -  2\widetilde{C}_3v_2 - 2\widetilde{C}_2v_1 &= g_2, \\
v_1(-R_{\rm c}) = v_2(-R_{\rm c}) = 0 \text{ if } m \neq 0,\quad v_1(R_{\rm c})=v_2(R_{\rm c})  &= 0.
\end{aligned}
\end{equation}
We now analyze the cases $m \neq 0$ and $m = 0$ separately.

{\bf Case $\bm{m\neq 0}$}: Let $L_k(t)$ is the $k$th degree Legendre polynomial and define
\begin{equation*}
X_{N_{\rm pde}}(m) = \text{span}\left\{\phi_i(t) = L_i(\zeta/R_{\rm c}) - L_{i+2}(\zeta/R_{\rm c}), i=0,1,...,{N_{\rm pde}}-2\right\}.
\end{equation*}
It is evident that $v(\pm R_{\rm c}) = 0$ holds for all $v\in X_{N_{\rm pde}}(m)$. Then the spectral Legendre-Galerkin approximation to \eqref{eq:coordinate transformation} is to find $v_{1,{N_{\rm pde}}}, v_{2,{N_{\rm pde}}}\in X_{N_{\rm pde}}(m)$ such that
\begin{equation}
\label{eq:weighted weak formulation}
\begin{aligned}
&\left\{
\begin{aligned}
&\left( \widetilde{C}_1(R_{\rm c}+\zeta)^2 v'_{1,{N_{\rm pde}}}, w'_1 \right) + \left( 2\widetilde{C}_1(R_{\rm c}+\zeta) v'_{1,{N_{\rm pde}}}, w_1 \right) - \left( 2\widetilde{C}_3 v_{1,{N_{\rm pde}}}, w_1 \right)   \\
&\quad - \left( 2\widetilde{C}_2(R_{\rm c}+\zeta) v'_{2,{N_{\rm pde}}}, w_1 \right) + \left( 2\widetilde{C}_2 v_{2,{N_{\rm pde}}}, w_1 \right) \\
&\quad = \left( I_{N_{\rm pde}} g_1, w_1 \right), \quad \forall w_1 \in X_{N_{\rm pde}}(m), \\
&\left( \widetilde{C}_1(R_{\rm c}+\zeta)^2 v'_{2,{N_{\rm pde}}}, w'_2 \right) + \left( 2\widetilde{C}_1(R_{\rm c}+\zeta) v'_{2,{N_{\rm pde}}}, w_2 \right) - \left( 2\widetilde{C}_3 v_{2,{N_{\rm pde}}}, w_2 \right) \\
&\quad + \left( 2\widetilde{C}_2(R_{\rm c}+\zeta) v'_{1,{N_{\rm pde}}}, w_2 \right) - \left( 2\widetilde{C}_2 v_{1,{N_{\rm pde}}}, w_2 \right) \\
& \quad = \left( I_{N_{\rm pde}} g_2, w_2 \right), \quad \forall w_2 \in X_{N_{\rm pde}}(m).
\end{aligned}
\right.
\end{aligned}
\end{equation}
where $(\cdot,\cdot)$ is the $L^2$-inner product in $D$ and $I_{N_{\rm pde}}$ is the interpolation operator relative to the Gauss-Lobatto points. We define

\begin{alignat*}{3}
\varphi_{ij} &= \int_I (R_{\rm c} + \zeta)^2 \, \phi_j'(\zeta) \, \phi_i'(\zeta) \, \mathrm{d}\zeta, 
&\enspace \psi_{ij} &= \int_I (R_{\rm c} + \zeta) \, \phi_j'(\zeta) \, \phi_i(\zeta) \, \mathrm{d}\zeta, 
&\enspace \mu_{ij} &= \int_I \phi_j \phi_i \, \mathrm{d}\zeta, \\
f_{1,i}     &= \int_I (I_{N_{\rm pde}} g_1)(\zeta) \, \phi_i(\zeta) \, \mathrm{d}\zeta, 
&\enspace f_{2,i} &= \int_I (I_{N_{\rm pde}} g_2)(\zeta) \, \phi_i(\zeta) \, \mathrm{d}\zeta, 
\end{alignat*}
and
\begin{equation*}
v_{1,{N_{\rm pde}}} = \sum_{i=0}^{N_{\rm pde}-2} x_i \, \phi_i(\zeta), \enspace v_{2,{N_{\rm pde}}} = \sum_{i=0}^{N_{\rm pde}-2} y_i \, \phi_i(\zeta).
\end{equation*}
For \(i,j = 0,1,\ldots,N_{\rm pde}-2\), define the matrices \(\boldsymbol{\varphi} = (\varphi_{ij})\), \(\boldsymbol{\psi} = (\psi_{ij})\), and \(\boldsymbol{\mu} = (\mu_{ij})\). Let the vectors \(\mathbf{f}_1 = (f_{1,0}, \ldots, f_{1,N_{\rm pde}-2})^\top\), \(\mathbf{f}_2 = (f_{2,0}, \ldots, f_{2,N_{\rm pde}-2})^\top\), \(\mathbf{x} = (x_0, \ldots, x_{N_{\rm pde}-2})^\top\), and \(\mathbf{y} = (y_0, \ldots, y_{N_{\rm pde}-2})^\top\). Then, the weak formulation~\eqref{eq:weighted weak formulation} reduces to
\begin{equation}
\label{eq:linear system}
\begin{bmatrix}
\widetilde{C}_1 \varphi + 2\widetilde{C}_1 \psi - 2\widetilde{C}_3 \mu & -2\widetilde{C}_2 \psi + 2\widetilde{C}_2 \mu \\
2\widetilde{C}_2 \psi - 2\widetilde{C}_2 \mu & \widetilde{C}_1 \varphi + 2\widetilde{C}_1 \psi - 2\widetilde{C}_3 \mu
\end{bmatrix}
\begin{bmatrix}
\mathbf{x} \\
\mathbf{y}
\end{bmatrix}
=
\begin{bmatrix}
\mathbf{f}_1 \\
\mathbf{f}_2
\end{bmatrix}.
\end{equation}

{\bf Case $\bm{m = 0}$}:  In this case, we define
\begin{equation*}
X_{N_{\rm pde}}(0) := \text{span}\left\{ \phi_i(\zeta) = L_i\left(\frac{\zeta}{R_{\rm c}}\right) - L_{i+1}\left(\frac{\zeta}{R_{\rm c}}\right) \,\Big|\, i = 0, 1, \ldots, N_{\rm pde}-1 \right\}.
\end{equation*}
It is straightforward to verify that \( v(R) = 0 \) for all \( v \in X_{N_{\rm pde}}(0) \).
Similarly, extending the index ranges in~\eqref{eq:linear system} to \(i,j = 0, \ldots, {N_{\rm pde}}-1\), we obtain
\begin{equation}
\label{eq:linear system-m=0}
\begin{bmatrix}
\widetilde{C}_1 \varphi + 2\widetilde{C}_1 \psi - 2\widetilde{C}_3 \mu & 0 \\
0 & \widetilde{C}_1 \varphi + 2\widetilde{C}_1 \psi - 2\widetilde{C}_3 \mu
\end{bmatrix}
\begin{bmatrix}
\mathbf{x} \\
\mathbf{y}
\end{bmatrix}
=
\begin{bmatrix}
\mathbf{f}_1 \\
\mathbf{f}_2
\end{bmatrix}.
\end{equation}

Building on the singularity removal procedure described in~\S~\ref{sec:remove-singularity}, we express each $u_i$ in terms of its analytic counterpart $v_i$. The spectral Galerkin approximation of $v_i$ is then computed following the construction in~\S~\ref{sec:sub:sub:spectral}, yielding the components $v_{i,1m,N_{\rm pde}}$ and $v_{i,2m,N_{\rm pde}}$. These approximations allow us to reconstruct $u_i^{\rm C}$ as
\[
u_{i,{N_{\rm pde}}}^{\rm C} = r\cdot\sum_{m=0}^{M} \big( v_{i,1m,{N_{\rm pde}}}(r) \cos(m \theta_j) + v_{i,2m,{N_{\rm pde}}}(r) \sin(m \theta_j) \big).
\]
The following lemma, whose proof is given in Appendix~\ref{app:proof-thm-pde}, provides an error estimate for the spectral approximation. While our discussion in \S~\ref{sec:pde} focuses on \(i = 1, 2\), the result extends to general \(i\), as all cases reduce to second-order elliptic problems whose approximation error depends primarily on the solution regularity.

\begin{lemma}
\label{thm:pde}
Under the condition of Lemma~\ref{lem:analytic_solution}, let \( u_i^{\rm C} \) be the solutions to~\eqref{eq:uCiPDE} for $i\geq 1$. Then
there exist positive constants \( C_{\rm s} \) and $c$, independent of  \( N_{\rm pde} \) and \( R_{\rm c} \), such that 
\begin{equation}
\label{eq:pde-error}
\| \nabla u_i^{\rm C} - \nabla u_{i,{N_{\rm pde}}}^{\rm C} \|_{L^2} \leq C_{\rm s} \left( R_{\rm c}^{-i} \log(R_{\rm c}) + e^{-c{N_{\rm pde}}} \right).
\end{equation} 
\end{lemma}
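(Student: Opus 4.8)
The plan is to split the error into a truncation part, coming from replacing the infinite domain by $B_{R_{\rm c}}$ with homogeneous Dirichlet data, and a discretization part, coming from the Legendre--Galerkin approximation on the rescaled, desingularized system. First I would fix $i\ge 1$ and let $u_i^{\rm C}$ denote the exact solution of~\eqref{eq:uCiPDE} on $\mathbb{R}^2$, and $u_{i,R_{\rm c}}^{\rm C}$ the solution of the truncated problem~\eqref{u1-pde} on $B_{R_{\rm c}}$. Inserting the intermediate term, one has
\begin{equation*}
\| \nabla u_i^{\rm C} - \nabla u_{i,{N_{\rm pde}}}^{\rm C} \|_{L^2(B_{R_{\rm c}})}
\le \| \nabla u_i^{\rm C} - \nabla u_{i,R_{\rm c}}^{\rm C} \|_{L^2(B_{R_{\rm c}})}
+ \| \nabla u_{i,R_{\rm c}}^{\rm C} - \nabla u_{i,{N_{\rm pde}}}^{\rm C} \|_{L^2(B_{R_{\rm c}})},
\end{equation*}
and it suffices to bound each term by $C_{\rm s}( R_{\rm c}^{-i}\log R_{\rm c} + e^{-cN_{\rm pde}})$ separately. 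For the truncation term I would use the decay estimate~\eqref{eq:uCestimate}, which gives $|\nabla^j u_i^{\rm C}(\ell)|\lesssim |\ell|^{-i-j}\log^i|\ell|$: the "boundary error'' is governed by the trace of $u_i^{\rm C}$ on $\partial B_{R_{\rm c}}$, of size $R_{\rm c}^{-i}\log R_{\rm c}$, and by the tail energy $\int_{\mathbb{R}^2\setminus B_{R_{\rm c}}} |\nabla u_i^{\rm C}|^2$. A standard cutoff argument (subtract a harmonic-type extension of the boundary trace, use $H^{\rm C}$-coercivity from positive definiteness of $\mathbb{C}=\nabla^2 W(0)$, and Céa's lemma for the truncated variational problem) yields a bound of order $R_{\rm c}^{-i}\log R_{\rm c}$ for this term; the logarithmic power $\log^i$ can be absorbed into $\log$ at the cost of the constant since it is dominated for large $R_{\rm c}$, or retained and stated as written.

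For the discretization term I would pass to the rescaled variable $v_i = r^i u_i^{\rm C}$ and the desingularized system~\eqref{eq:u1-pde-r}, whose right-hand side $f_i$ is analytic in $\overline{\Omega_{\rm c}}$ by construction (the singular factors $r^{-i}$ are exactly cancelled). By Lemma~\ref{lem:analytic_solution}, $v_i$ is analytic on $\Omega_{\rm c}$, hence so is each Fourier mode $v_{i,1m}, v_{i,2m}$ solving~\eqref{eq:v1-in-pole}, with analyticity constants uniform in $m$ because the $m$-dependence of $\widetilde{C}_2,\widetilde{C}_3$ is polynomial and enters the source only through smooth trigonometric weights. The spectral estimate then has two ingredients: (i) the truncation of the Fourier series in $\theta$ at mode $M$, which for an analytic periodic function decays like $e^{-\alpha M}$; and (ii) the Legendre--Galerkin error for each radial ODE system~\eqref{eq:coordinate transformation}, which for an analytic solution on $D=(-R_{\rm c},R_{\rm c})$ with the basis $X_{N_{\rm pde}}$ decays like $e^{-\beta N_{\rm pde}}$ by the classical approximation theory of Legendre expansions~\cite{2011Spectral}, combined with Céa/Strang for the Galerkin projection (coercivity of the bilinear form in~\eqref{eq:weighted weak formulation} follows from positive definiteness of $\widetilde{C}_1$ and a Poincaré-type argument on $D$), plus the interpolation error from $I_{N_{\rm pde}}g_i$, which is also exponentially small for analytic $g_i$. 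Taking $M\sim N_{\rm pde}$ and converting back from $v_i$ to $u_i^{\rm C}=r^{-i}v_i$ — noting that division by $r^i$ is bounded on $B_{R_{\rm c}}\setminus B_\delta$ and handled near $r=0$ using the vanishing of $v_i$ there — gives the $e^{-cN_{\rm pde}}$ bound with $c,C_{\rm s}$ independent of $N_{\rm pde}$ and $R_{\rm c}$.

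The main obstacle I anticipate is establishing the analyticity and, more importantly, the \emph{uniformity} in $R_{\rm c}$ and in the Fourier index $m$ of the exponential decay rate $c$. The region of analyticity of $v_i$ (a Bernstein-type ellipse around $D$) must not shrink as $R_{\rm c}\to\infty$; this requires tracking how the decay estimates~\eqref{eq:uCestimate} translate into $R_{\rm c}$-independent bounds on the analytic continuation of $f_i$ and $v_i$, and checking that the $m^2$-growth of $\widetilde{C}_3$ does not degrade the per-mode analyticity radius faster than the Fourier coefficients of the (analytic) source decay. I would handle this by working on the fixed reference interval $(-1,1)$ after rescaling $\zeta = R_{\rm c} t$, where the operator coefficients become $R_{\rm c}$-independent up to the analytic source, and by using that the analyticity strip width of $u_i^{\rm C}$ in the angular variable is an intrinsic property of the constant-coefficient elliptic operator $H^{\rm C}$ and the analytic predictor data, independent of the truncation radius. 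The remaining steps — Céa's lemma, Poincaré inequalities, Legendre approximation bounds, and the conversion between $v_i$ and $u_i^{\rm C}$ — are routine once this uniformity is in place.
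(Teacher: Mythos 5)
Your proposal follows essentially the same route as the paper: the error is split into a domain-truncation part, controlled via the decay estimate \eqref{eq:uCestimate} and elliptic a priori bounds for the difference $u_i^{\rm C}-u_{i,R_{\rm c}}^{\rm C}$ (which solves a homogeneous problem with boundary data $u_i^{\rm C}$ on $\partial B_{R_{\rm c}}$), and a spectral-discretization part, controlled by the analyticity of the rescaled solution $v_i$ from Lemma~\ref{lem:analytic_solution}. The only cosmetic differences are that the paper bounds the truncation error by rescaling to the unit disk and invoking elliptic regularity plus the trace theorem rather than a cutoff/C\'ea argument, and it simply cites the standard Legendre--Galerkin exponential convergence result where you (reasonably) flag the uniformity of the analyticity constants in $R_{\rm c}$ and the Fourier index $m$ as the point requiring care.
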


\subsection{Numerical algorithm for higher-order boundary conditions}

We now present the main algorithm for constructing higher-order boundary conditions, building upon the preceding analytical framework and approximation strategies.

To be consistent with the approximate corrector space defined in~\eqref{eq:u_acR}, we reformulate the equilibrium condition~\eqref{eq:exp_b} into its continuous counterpart. Following the analysis in~\cite[Lemma 3.5]{braun2025higher}, and under the assumptions of Theorem~\ref{thm:dislocation}, for $p = 1, 2$, there exist tensors $a^{(i,0,k)} \in (\mathbb{R}^{d})^{\odot i}$ for $1 \leq i \leq p$, $1 \leq k \leq N$ such that
\begin{equation}
\label{eq:exp_ai}
\bar{u} = \sum_{i=0}^{p} u_i^{\rm C} + \sum_{k=1}^{N} \left( \sum_{i=1}^{p} a^{(i,0,k)} : \nabla^i (G_0)_{\cdot k} \right) + w,
\end{equation}
where $w$ denotes the asymptotic remainder. Furthermore, for all $j = 1, 2$ and $\alpha \in \mathbb{N}_0$, the remainder satisfies the decay estimate
\begin{equation}
\label{eq:structurewithmomentscontremainder}
| D^j w(\ell) | \lesssim |\ell|^{-2-j} \log^{\alpha + 1}(|\ell|),
\end{equation}
which ensures fast spatial decay of higher-order corrections away from the defect core.

A direct correspondence exists between the coefficients $a^{(i,0,k)}$ and the force moments of the displacement field. Specifically, for $i = 1, 2$, collecting the components of $a^{(i,0,k)}$ over all sites and directions yields:
\begin{equation}
\label{eq:a-I_}
a^{(1,0,\cdot)} = -\mathcal{I}_1[\bar{u}], \quad a^{(2,0,\cdot)} = \tfrac{1}{2} \mathcal{I}_2[\bar{u}],
\end{equation}
where $\mathcal{I}_i[\bar{u}]$ denotes the $i$-th order force moment. In practice, these moments are approximated by their truncated versions $\mathcal{I}_{i,R}$, computed over a finite domain (cf.~\eqref{eq:defn_IjR}). This leads naturally to the \emph{moment iteration scheme} described in Appendix~\ref{app:sec:b}, and detailed in~\cite[Algorithm 3.1]{braun2025higher}.

Together, these results constitute a complete computational framework for constructing higher-order boundary conditions via force moment. For further implementation details and theoretical justifications, we refer the reader to~\cite[Section 3.4]{braun2025higher}.

\begin{algorithm}[!htb]
\caption{Computation of {\em correctors} with higher-order boundary condition}
\label{alg:moment_iter_a}
\begin{enumerate}
\item Compute {\em the zeroth-order corrector}: $\bar{u}_{0, R}=\bar{u}_R$ such that \eqref{eq:cellp:galerkin} holds. 
The convergence $\lVert D\bar{u} - D\bar{u}_{0, R} \rVert_{\ell^2} \lesssim R^{-1}\log(R)$, $\big|\mathcal{E}(\bar{u})-\mathcal{E}(\bar{u}_{0, R})\big| \lesssim R^{-2} \cdot \log^{2}(R)$ are then obtained (cf.~$p=0$ in Theorem~\ref{th:galerkinaM}).

\item Solve the higher-order predictor equations~\eqref{eq:uCiPDE} for $u_1^{\rm C}$ using the spectral Galerkin method in~\S~\ref{sec:sub:sub:spectral}.
    \item Evaluate $a^{(1,0)}_1$ by $\mathcal{I}_{j, R}[\bar{u}_{0,R}]$ (cf.~\eqref{eq:a-I_}). Compute {\em the first-order far-field predictor} (boundary condition) by~\eqref{eq:exp_ai}
\begin{eqnarray}
\hat{g}_1 := u_0^{\rm C} + u_1^{\rm C} 
+ a^{(1,0)}_1:\nabla G_0.
\end{eqnarray}


\item  Compute {\em the first-order corrector}: $\bar{u}_{1,R} = \hat{g}_1 + \bar{w}_{1,R}$, $\bar{w}_{1,R}\in\mathcal{W}_R$ such that 
\begin{equation*}
\delta\mathcal{E}( \bar{u}_{1,R})[v_R] = 0 \qquad \forall v_R \in \mathcal{W}_R.
\end{equation*}

The desired accuracy $\lVert D\bar{u} - D\bar{u}_{1, R} \rVert_{\ell^2} \lesssim R^{-2}\log^2(R)$ and $\big|\mathcal{E}(\bar{u})-\mathcal{E}( \bar{u}_{1, R} )\big| \lesssim R^{-4} \log^{4}(R)$ is then achieved (cf.~$p=1$ in Theorem~\ref{th:galerkinaM}).
\end{enumerate}
\end{algorithm}

\begin{remark}
In principle, one may construct higher-order boundary conditions  to achieve faster convergence. For instance, when $p = 2$, Steps 2--4 of Algorithm~\ref{alg:moment_iter_a} can be naturally extended: we solve the higher-order predictor equations~\eqref{eq:uCiPDE} using the spectral Galerkin method to obtain $u_2^{\mathrm{C}}$, and compute the coefficients $a^{(1,0)}_2$ and $a^{(2,0)}_2$ via the moment estimator $\mathcal{I}_{2,R}[\bar{u}_{1,R}]$ (cf.~\eqref{eq:a-I}). Using~\eqref{eq:exp_ai}, we then construct {\it the second-order far-field predictor}:
\[
\hat{g}_2 := u_0^{\rm C} + u_1^{\rm C} + u_2^{\rm C}
+ a^{(1,0)}_2 : \nabla G_0 + a^{(2,0)}_2 : \nabla G_1.
\]
The coefficients $a^{(1,0)}_2$ and $a^{(2,0)}_2$ are iteratively refined using the moment iteration procedure described in~\cite[Algorithm 3.1]{braun2025higher} until the stopping criterion~\eqref{eq:stop} is satisfied. Finally, we solve for the second-order corrector $\bar{u}_{2,R} = \hat{g}_2 + \bar{w}_{2,R}$ with $\bar{w}_{2,R} \in \mathcal{W}_R$ such that~\eqref{eq:galerkin_approx_a} holds. This leads to improved convergence rates: $\| D\bar{u} - D\bar{u}_{2, R} \|_{\ell^2} \lesssim R^{-3} \log^3(R), ~\big| \mathcal{E}(\bar{u}) - \mathcal{E}(\bar{u}_{2, R}) \big| \lesssim R^{-6} \log^6(R)$.

However, in practice, achieving and verifying higher-order convergence is increasingly difficult, and solving the corresponding higher-order elliptic systems increases the computational cost. For these reasons, our numerical experiments focus on the case $p=1$, which already demonstrates the effectiveness of the proposed framework.
\end{remark}

\section{Numerical Experiments}
\label{sec:numexp}

In this section, we apply the main algorithm (cf.~Algorithm~\ref{alg:moment_iter_a}) to numerically validate the high-order boundary conditions established in Theorem~\ref{th:galerkinaM}. These boundary conditions enable faster convergence as the computational domain increases at a moderate computational cost.



\subsection{Model problems}
\label{sec:sub:numer_model}


In numerical experiments, we considered tungsten (W) with a body centered cubic (BCC) structure. The interatomic interactions are modeled using the Embedded Atomic Model (EAM) potential~\cite{Daw1984a}. The cutoff radius is set to $r_{\rm cut}=5.5$\AA, covering the interaction of the third neighbor in the lattice.

We consider two typical types of straight dislocations, and project the lattice to a two-dimensional lattice on the normal plane to describe their behavior. The core geometric shape of each instance is depicted on the (001) plane. As shown in Figure~\ref{figs:geom1}:
\begin{itemize}
	\item {\em Antiplane screw dislocation  (Figure~\ref{fig:sub:anti}):} $u:\Lambda\rightarrow\mathbb{R}^1$,
	\item {\em (001)[100] Edge dislocation (Figure~\ref{fig:sub:edge}):} $u:\Lambda\rightarrow\mathbb{R}^2$.
\end{itemize}


\begin{figure}[!htb]
    \centering
    \subfloat[Antiplane screw dislocation \label{fig:sub:anti}]{
    \includegraphics[height=4.0cm]{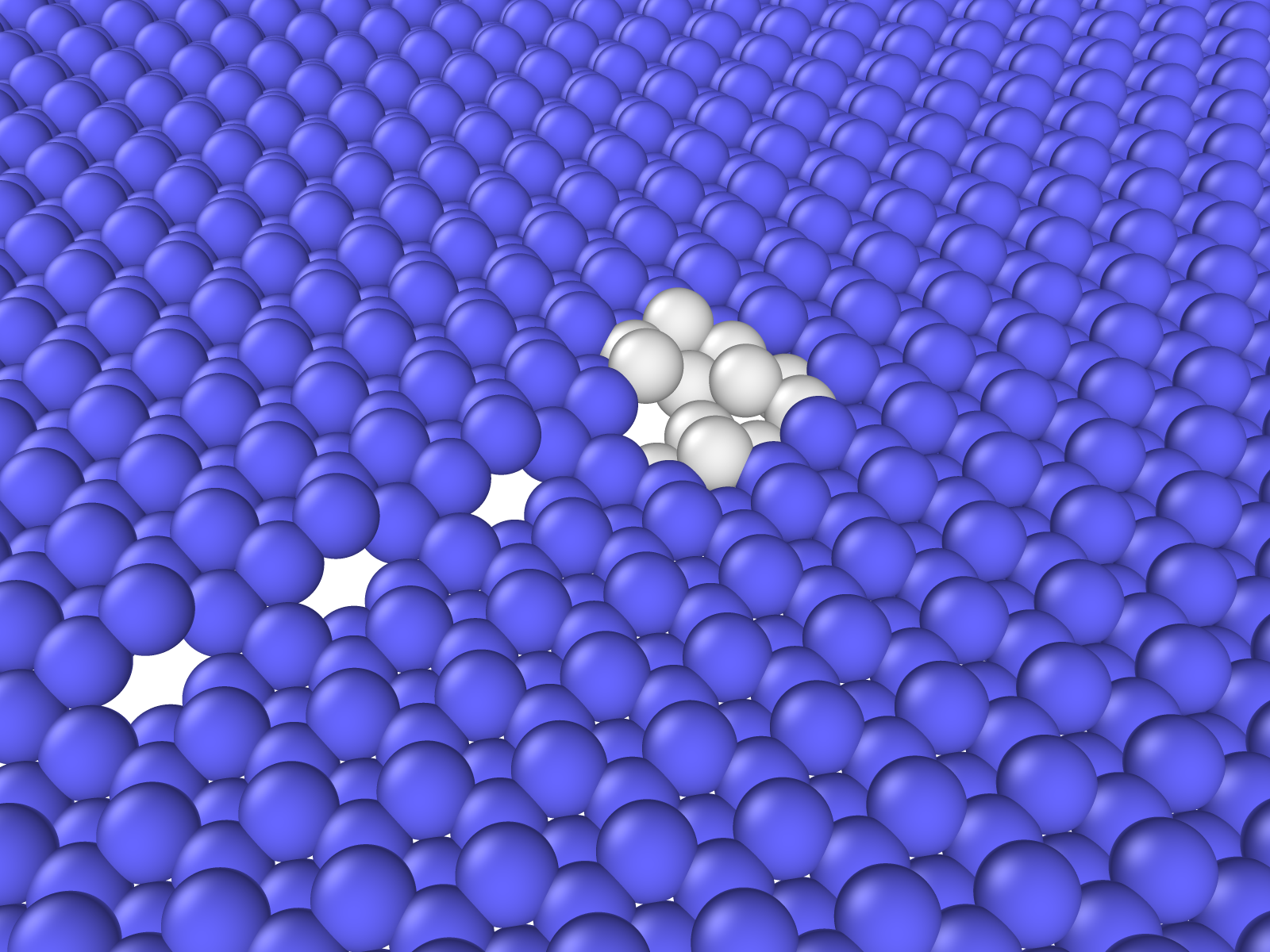}} \qquad
    \subfloat[(001)\textnormal{[100]} Edge dislocation \label{fig:sub:edge}]{
    \includegraphics[height=4.0cm]{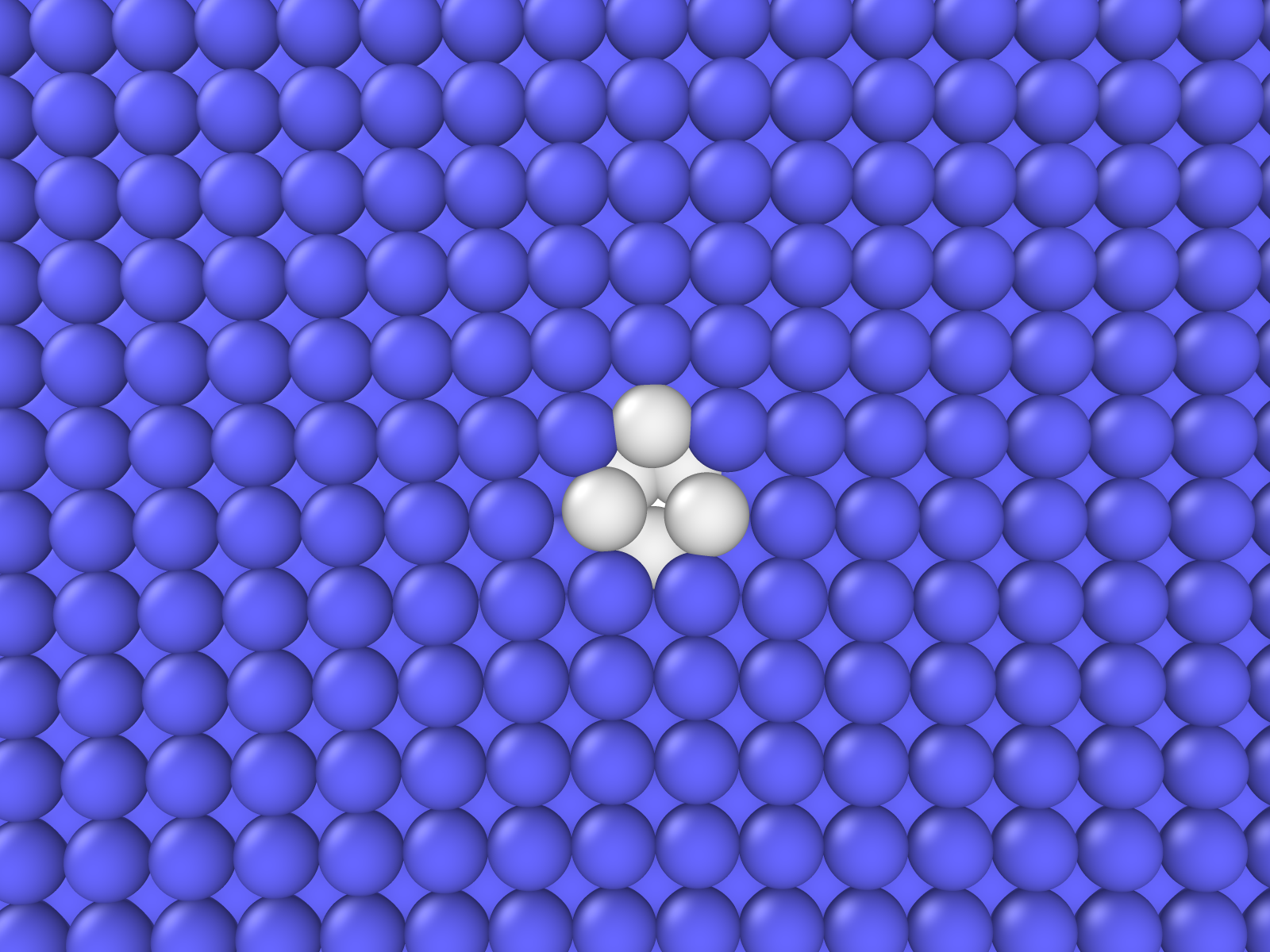}}
    \caption{Single straight dislocations in BCC Tungsten considered in this work. Colored by Common Neighbor Analysis (CNA) in Ovito~\cite{stukowski2009visualization}.} 
    \label{figs:geom1}
\end{figure}



The supercell problem~\eqref{eq:equil_edge} is solved using a preconditioned Limited-memory Broyden–Fletcher–Goldfarb–Shanno (LBFGS) algorithm~\cite{liu1989limited}, followed by a standard Newton refinement step for post-processing. The minimization terminates when the residual force satisfies \( \| \nabla \mathcal{E}(u) \|_{\infty} < \text{tol} = 10^{-8} \). In the practical implementation of the continuum predictor, the continuous Green's function \( G_0 \) is evaluated via the Barnett formula~\cite{barnett1972precise}; see also the derivation in~\cite[Sec.~6.3]{braun2025higher} for further details.

The higher-order predictor equations (cf.~\S\ref{sec:sub:pde}) are discretized and solved using the spectral Galerkin method implemented in the Python package \texttt{shenfun}~\cite{mortensen2018shenfun}. All numerical experiments are conducted on a workstation equipped with an Intel Xeon Platinum 8268 CPU (2.90\,GHz) and 361\,GB RAM.

\subsection{Convergence of cell problems}
\label{sec:sub:conv}


In this section, we evaluate the decay of strain correctors and study the convergence of the geometric error \( \|D\bar{u} - D\bar{u}_{i,R}\|_{\ell^2} \) and energy error \( |\mathcal{E}(\bar{u}) - \mathcal{E}(\bar{u}_{i,R})| \) with respect to the computational radius. The approximate equilibrium solutions \( \bar{u}_{i,R} \) are obtained via Algorithm~\ref{alg:moment_iter_a}, using zeroth- and first-order boundary conditions (\( i = 0, 1 \)). From the Galerkin solution \( \bar{u}_{p,R} \) of~\eqref{eq:galerkin_approx_a}, we extract the corrector component \( \bar{w}_{p,R} \) introduced in~\eqref{eq:u_acR}. We choose $R_{\rm c}=320$ as the computational domain size to ensure that the truncation error in Theorem~\ref{eq:galerkin_approx_a} is negligible. As a reference solution, we use the result computed on a larger domain of radius \( R_{\rm dom} = 100a_0 \), where \( a_0 \) is the lattice constant of BCC tungsten at room temperature.

\subsubsection*{Decay of strains}
\label{sec:sub:strains}


We first verify the decay behavior of \( \bar{w}_{p,R} \) in strains, as predicted by Theorem~\ref{thm:dislocation}. Figure~\ref{figs:decay-Anti-edge} illustrates the decay of strain magnitudes for different predictor orders with respect to the distance to the defect core \( |\ell| \), for both antiplane screw and edge dislocations. The transparent points correspond to the data pairs \( \big(|\ell|, |D \bar{w}_{i, R_{\rm dom}}(\ell)|\big) \) for \( i = 0, 1 \), while the solid curves indicate their envelope profiles. The improved decay rates of the higher-order predictors observed numerically are in agreement with the theoretical predictions.


\begin{figure}[!htb]
    \centering
    \includegraphics[height=4.5cm]{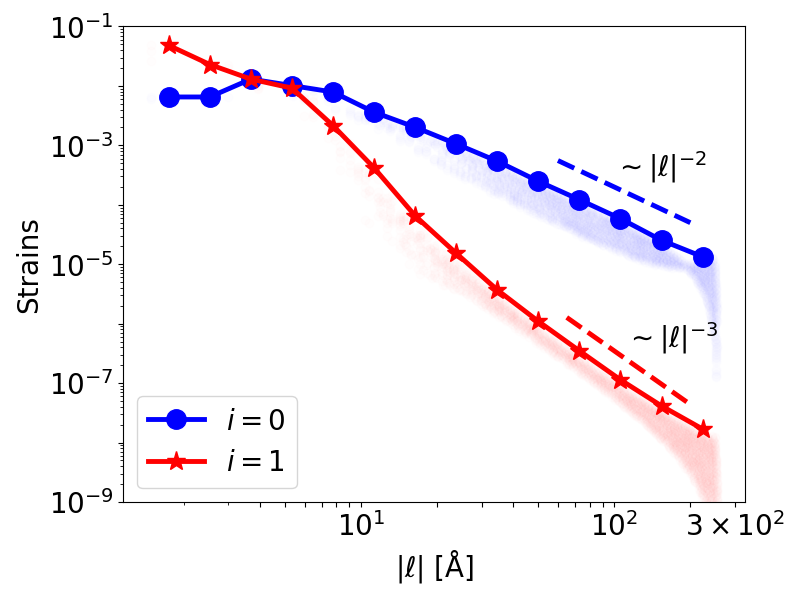}~~ 
    \includegraphics[height=4.5cm]{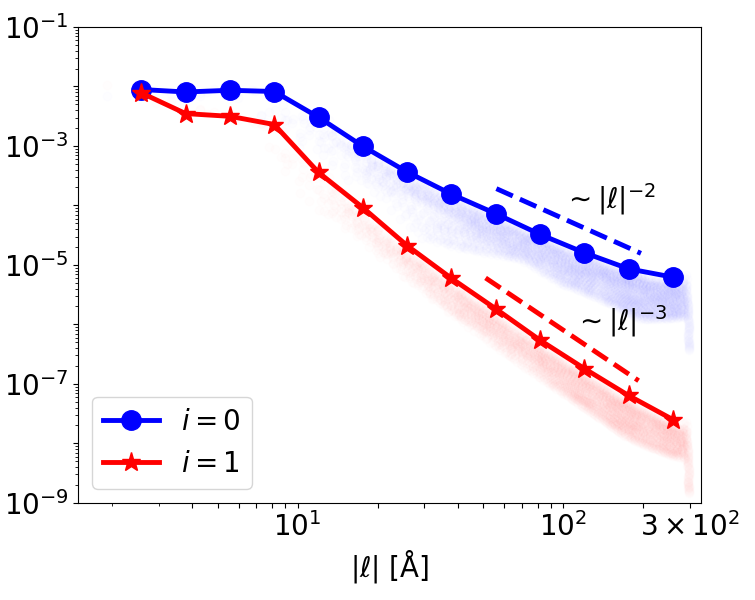}
   \caption{Decay of strains $|D \bar{w}_{i, R_{\rm dom}}(\ell)|$ as a function of the distance to the defect core $\lvert \ell \rvert$. \textbf{Left:} Antiplane screw dislocation. \textbf{Right:} (001)[100] Edge dislocation.}
    \label{figs:decay-Anti-edge}
\end{figure}
 
\subsubsection*{Geometry error}

We then consider the convergence of the geometry error \( \|D\bar{u} - D\bar{u}_{i, R}\|_{\ell^2} \) with respect to the domain size \( R \), as shown in Figure~\ref{figs:convergence-geo}. This result clearly illustrates the convergence behavior of the supercell approximation~\eqref{eq:equil_edge} for both standard and higher-order far-field predictors. In particular, the boundary conditions constructed via Algorithm~\ref{alg:moment_iter_a} lead to visibly improved convergence rates, owing to the faster decay of the {\em corrector} solutions. This observation is consistent with the theoretical prediction of Theorem~\ref{th:galerkinaM}. Notably, for \( R > 30 \), the enhanced convergence becomes more pronounced, indicating that high-order accuracy can be achieved using relatively small computational domains. This significantly reduces computational cost and enables more efficient high-accuracy electronic structure calculations for dislocations.


\begin{figure}[!htb]
    \centering
    \includegraphics[height=4.5cm]{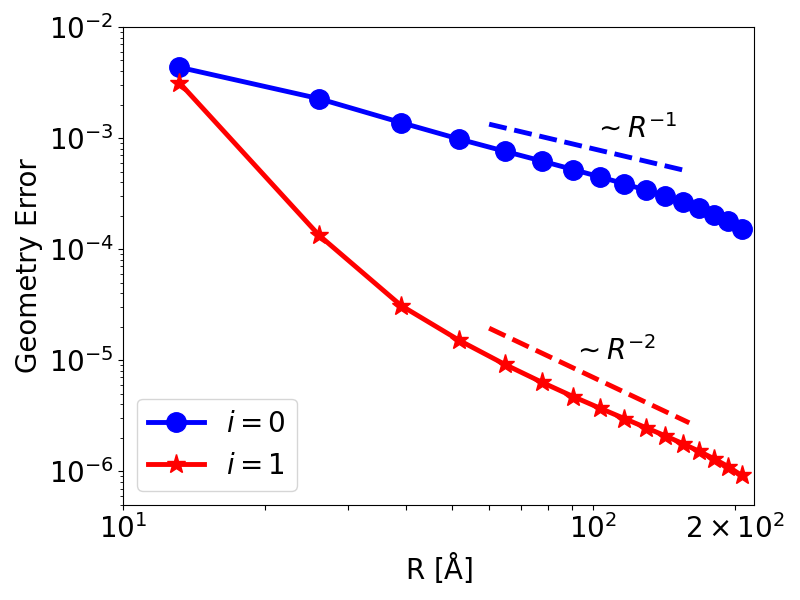}~~
    \includegraphics[height=4.5cm]{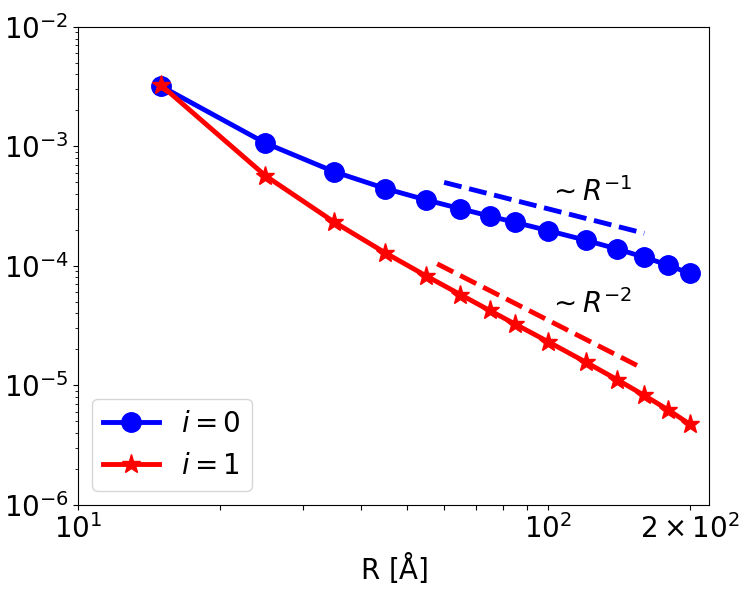}
    \caption{Convergence of geometry error $\|D\bar{u} - D\bar{u}_{i, R}\|_{\ell^2}$ for $i=0,1$ against domain size $R$. \textbf{Left:} Antiplane screw dislocation; \textbf{Right:} (001)[100] edge dislocation.} 
    \label{figs:convergence-geo}
\end{figure}
\subsubsection*{Energy error}

According to the relationship between geometric error and energy error~\eqref{eq:GE}, the convergence of the energy error is natural. In particular, we observe that the energy error \(\big|\mathcal{E}(\bar{u}) - \mathcal{E}(\bar{u}_{i, R})\big|\) for \(i=0,1\) decays with respect to the domain size \(R\), as shown in Figure~\ref{figs:convergence-energy}. The observed convergence rates match the theoretical predictions in Theorem~\ref{th:galerkinaM}.


\begin{figure}[!htb]
    \centering
    \includegraphics[height=4.5cm]{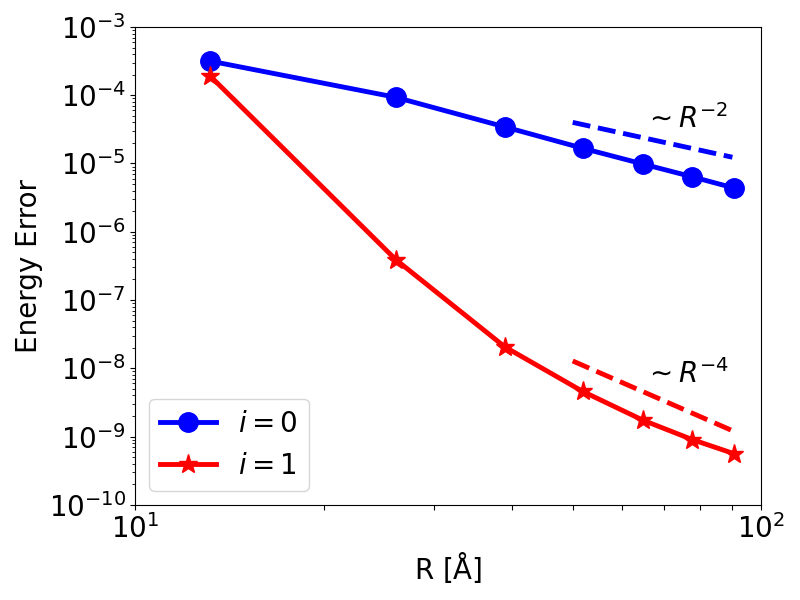}~~
    \includegraphics[height=4.5cm]{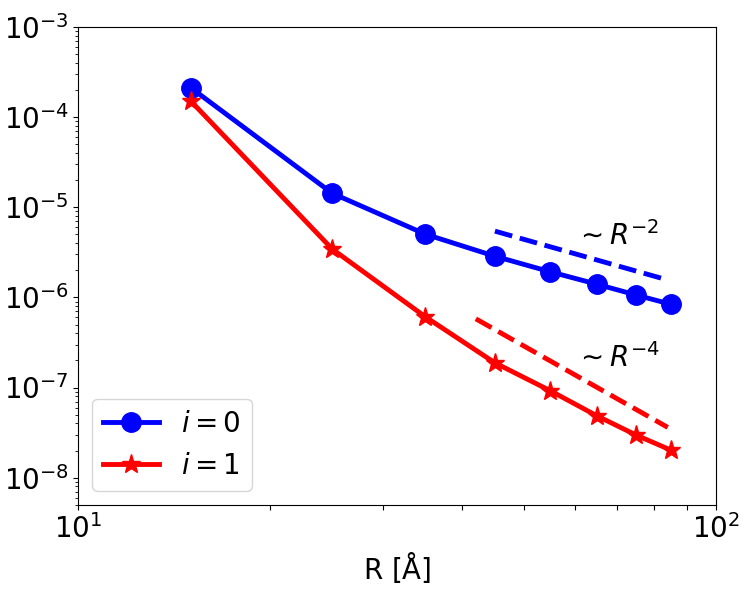}
    \caption{Convergence of energy error $\big|\mathcal{E}(\bar{u})-\mathcal{E}(\bar{u}_{i, R})\big|$ for $i=0,1$ against domain size $R$. \textbf{Left:} Antiplane screw dislocation; \textbf{Right:} (001)[100] edge dislocation.} 
    \label{figs:convergence-energy}
\end{figure}

\subsubsection*{Computational efficiency}

To evaluate the computational efficiency of Algorithm~\ref{alg:moment_iter_a}, we report the CPU time (in seconds) required to solve the two types of dislocations under different boundary conditions (\(i = 0, 1\)). 

Figure~\ref{figs:CPU-time-a} presents the total CPU time $T_{\rm tot}$ and the time devoted to constructing high-order boundary conditions $T_{\rm bc}$ for $i=1$, plotted against the number of atoms $N_{\rm at}$. The total time $T_{\rm tot}$ scales approximately linearly with $N_{\rm at}$. In addition, the boundary construction time $T_{\rm bc}$ remains significantly smaller than $T_{\rm tot}$, and the ratio $T_{\rm bc}/T_{\rm tot}$ decreases as $N_{\rm at}$ increases. This indicates that the relative cost of constructing the high-order boundary conditions remains low across all problem sizes.

Figure~\ref{figs:CPU-time-b} shows the Pareto frontiers of geometry and energy errors (represented by blue and red lines, respectively), with each point corresponding to a different $N_{\rm at}$. For a fixed $i$, both errors exhibit similar trends with respect to the total CPU time. Notably, the results for $i=1$ cluster closer to the bottom-left corner compared to those for $i=0$, indicating that high-order boundary conditions achieve comparable accuracy with substantially reduced computational cost. This clearly demonstrates the efficiency advantage of using high-order boundary conditions in practice.


\begin{figure}[!htb]
    \centering
    \subfloat[CPU time (in seconds) for $i=1$ versus the number of atoms $N_{\rm at}$.\label{figs:CPU-time-a}]{
     \includegraphics[height=4.5cm]{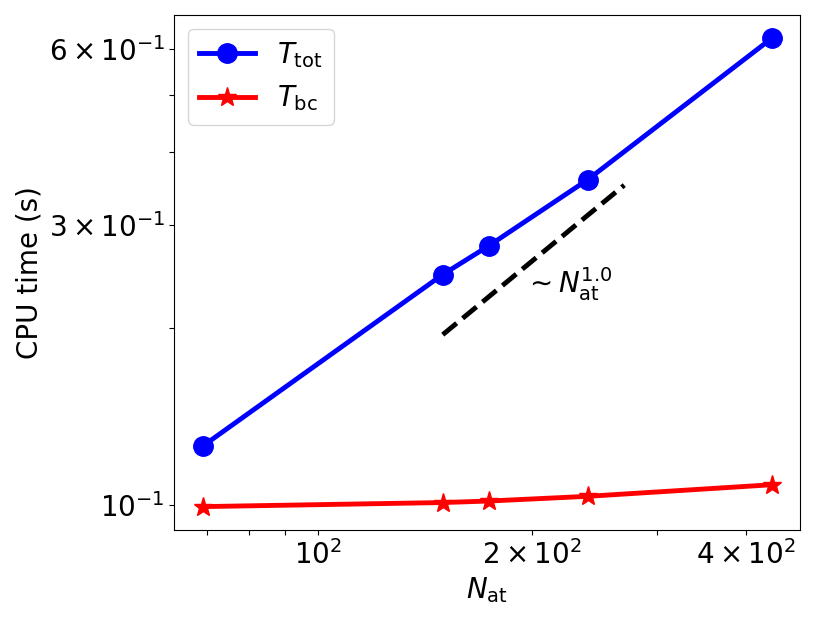}~~
    \includegraphics[height=4.5cm]{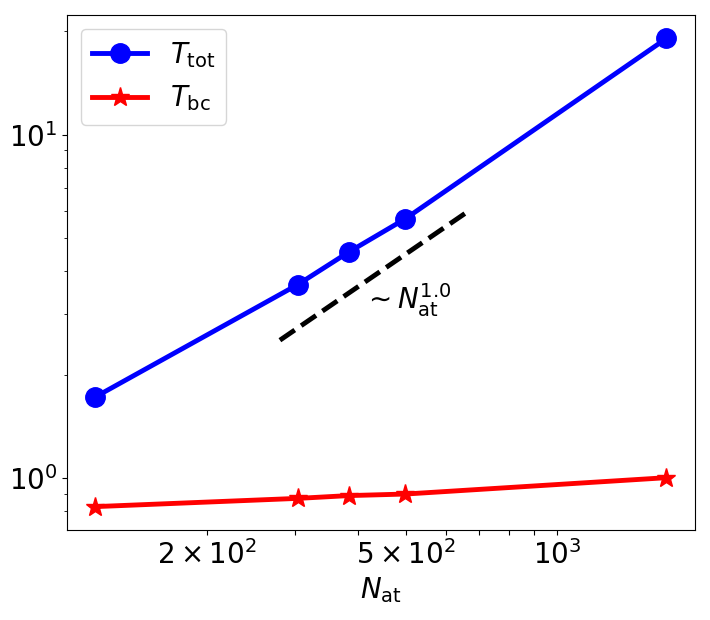}}~\\
    \hspace*{1.5mm}
    \subfloat[Geometry and energy errors for $i=0,1$ plotted against total CPU time.\label{figs:CPU-time-b}]{
    \includegraphics[height=4.5cm]{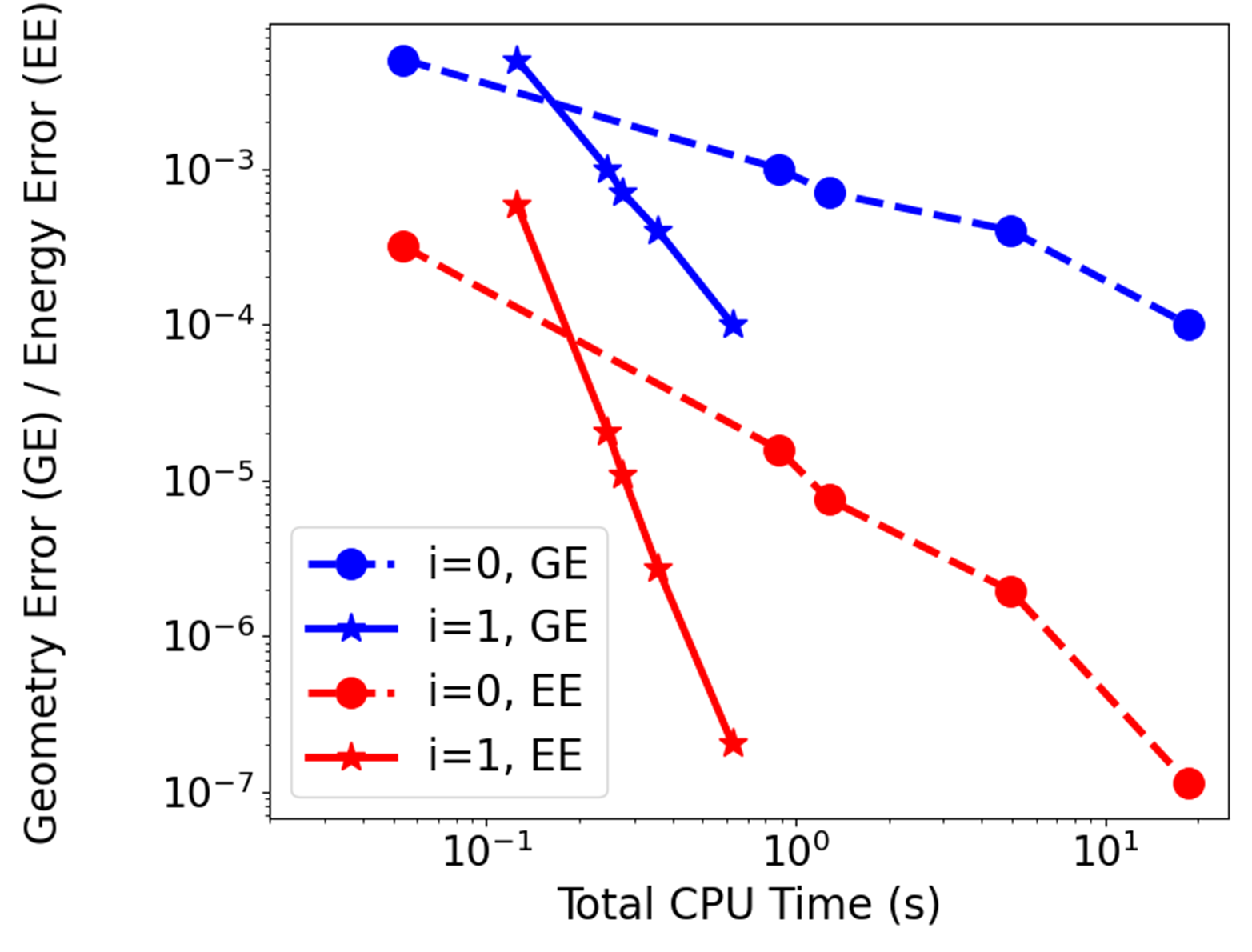}~~
\includegraphics[height=4.5cm]{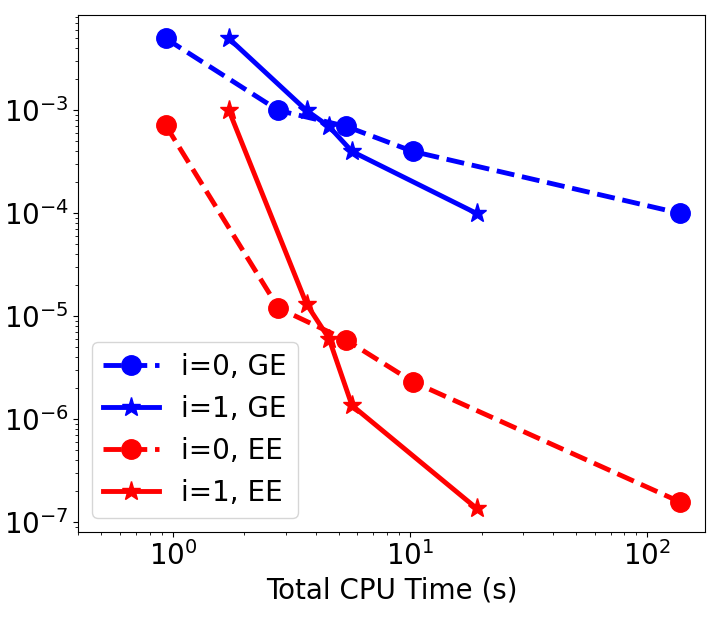}}~
    \caption{Computational efficiency of higher-order boundary conditions. \textbf{Left:} Antiplane screw dislocation; \textbf{Right:} (001)[100] edge dislocation.}
    \label{figs:CPU-time}
\end{figure}

\section{Conclusion}
\label{sec:conclusion}

In this work, we construct high-order boundary conditions for dislocations through multipole expansion of defect equilibrium. We develop a novel numerical scheme that involves employing spectral methods to solve high-order continuous PDEs, which improves the convergence of traditional cell approximations in defect simulations. To evaluate our method's effectiveness, we calculate the convergence of geometric error and energy error in numerical experiments, obtaining results consistent with theoretical predictions. Our numerical scheme is demonstrated to achieve accelerated convergence rates with respect to computational domain size.

The results discussed here focus exclusively on simple lattices and representative dislocations, establishing a foundational framework. While extending this work to multi-lattices and diverse dislocation types would require addressing additional technical challenges, there appear to be no fundamental theoretical barriers to such exploration. In numerical experiments, the equilibrium state of defects can be expanded to higher-order terms, enabling the implementation of higher-order boundary conditions. However, this requires solving high-order PDEs, which introduces further technical complexities. These challenges will be addressed in future work.


\appendix
\section{Preliminaries Results in \S~\ref{sec:disloc}}
\label{app:premilinary results}

In this section, we present the premilinary results as a supplement to the main text. Before that, let us introduce some more notations.


The symmetric tensor product for $\boldsymbol{\sigma}$, yielding a vector subspace $(\mathbb{R}^d)^{\odot k}$ within $(\mathbb{R}^d)^{\otimes k}$, is defined as $\boldsymbol{\sigma}^\odot:=  \sigma^{(1)} \odot \cdots \odot \sigma^{(k)} := {\rm sym~} \boldsymbol{\sigma}^\otimes := \frac{1}{k!} \sum_{g \in S_k} g(\boldsymbol{\sigma})^\otimes$, where $S_k$ is the symmetric group encompasses all permutations of $\{1, \ldots, k\}$, and $g(\sigma):=(\sigma^{g(1)},...,\sigma^{g(k)})$ for any $g\in S_k$ and $\sigma\in (\mathbb{R}^d)^{k}$. 
For the scalar product on these spaces, we denote it by $A : B$ for $A, B \in (\mathbb{R}^d)^{\otimes k}$, defined as the linear extension of $\boldsymbol{\sigma}^\otimes : \boldsymbol{\rho}^\otimes = \prod_{i=1}^k \sigma^{(i)} \cdot \rho^{(i)}$. 

\subsection{The properties of the site energy potential \texorpdfstring{$V$}{V}}
\label{app:V}
We summarize the assumptions and properties of the site energy potential \( V \) used throughout the analysis. To this end, sufficiently large radii $\hat{r}_{\mathscr{A}}$, and $\hat{m}_{\mathscr{A}}$ are selected to define the space $\mathscr{A}$ to restrict the admissible corrector displacements.
\begin{displaymath}
  \label{eq:disl:defn_Adm}
  \mathscr{A} := \big\{ u : \Lambda \to \mathbb{R}^N \mid \| \nabla u \|_{L^\infty} < \hat{m}_{\mathscr{A}}
  \text{ and } |\nabla u(x)| < 1/2 \text{ for } |x| > \hat{r}_{\mathscr{A}} \big\}.
\end{displaymath}
With $\hat{r}_{\mathscr{A}}$, and $\hat{m}_{\mathscr{A}}$ chosen sufficiently large, any equilibrium solution is guaranteed to lie within $\mathscr{A}$.
For antiplane screw dislocations, $\mathscr{A}$ may be taken as $\dot{\mathscr{W}}^{1,2}$, as only slip-invariance in the antiplane direction is required, that is, the topology of the projected 2D lattice remains unchanged.

Next, the slip operator $S_0$ acting on a displacement $w : \Lambda \to \mathbb{R}^N$ 
is defined by
\begin{equation}
\label{eq:disl:defn_Sop}
S_0 w(x) := 
\left\{
\begin{aligned}
&w(x), & x_2 > \hat{x}_2, \\
&w(x - \sf b_{12}) - \sf{b}, & x_2 < \hat{x}_2.
\end{aligned}
\right.
\end{equation}
For $w = u_0^{\rm C} + u, u \in \mathscr{A}$, we shall write $S_0 w = S_0 u_0^{\rm C} + Su$, where $S$
is an $\ell^2$-orthogonal operator, with dual $R = S^* =
S^{-1}$, \label{sym:S-R}
\begin{align*}
Su(\ell) 
:= \left\{ 
\begin{aligned}
&u(\ell), & \ell_2 > \hat{x}_2, \\
&u(\ell-\sf b_{12}), & \ell_2 < \hat{x}_2, 
\end{aligned}
\right.
\quad 
\text{and} \quad
Ru(\ell) 
:= \left\{
\begin{aligned}
&u(\ell), & \ell_2 > \hat{x}_2, \\
&u(\ell+\sf b_{12}), & \ell_2 < \hat{x}_2.
\end{aligned}
\right.
\end{align*}

We assume that $V$ is invariant under lattice slip:
\begin{equation}
\label{eq:disl:slip_invariance}
V\big(D(u_0^{\rm C}+u)(\ell)\big) 
= V\big(R D S_0(u_0^{\rm C}+u)(\ell)\big)
\qquad \forall u \in \mathscr{A}, \ell \in \Lambda.
\end{equation}

\subsection{The relationship between force moments and discrete and continuous coefficients}
\label{app:sub:a}
We establish the explicit relationship between force
moments and coefficients in continuous expansion.

For $i \in \mathbb{N}$, if $\ell \mapsto H[u](\ell) \otimes \ell^{\otimes i} \in \ell^1(\Lambda)$, we introduce the $i$-th force moment
\begin{equation}
\mathcal{I}_i[u] = \sum_{\ell \in \Lambda} H[u](\ell) \otimes \ell^{\otimes i},
\label{eq:def_Ii}
\end{equation}
which provides the essential mathematical structure for our subsequent analysis.

The discrete coefficients $b^{(i,k)}$ are theoretically computable via a linear transformation, as established in~\cite[Lemma 5.6]{braun2022asymptotic}. This relationship is explicitly given by:
\begin{equation}
\label{eq:bIrelation}
\big(\mathcal{I}_i(\bar{u})\big)_{\cdot k} = (-1)^i i! \sum\limits_{\rho \in \mathcal{S}^i} (b_{\text{exact}}^{(i,k)})_\rho\cdot\rho^{\odot},
\end{equation}
with the force moments $\mathcal{I}_i$ given in~\eqref{eq:def_Ii}.

For $p=1,2$, the framework in \S~\ref{sec:sub:coeffts} only needs the continuous Green's function $G_0$ to solve the atomic continuum error. On this basis, we can establish a completely continuous multipole expansion.
\begin{align}\label{eq:exp_a}
\bar{u} = \sum_{i=0}^{p}u_i^{\rm C} + \sum_{k=1}^N \Big(\sum_{i=1}^{p} a^{(i,0,k)}:\nabla^i (G_0)_{\cdot k} \Big)+ w 
\end{align}


By combining \eqref{eq:exp_b} with \eqref{eq:exp_a}, we derive a precise relationship between the coefficients $a^{(i,n,k)}$ and $b^{(i,k)}$ through Taylor expansion of the discrete difference stencil. For $1\leq i\leq 2$, $1\leq k \leq N$, and $n=0$, this relationship takes the form:
\begin{equation}
\begin{aligned}
\label{eq:a-b}
(a^{(1,0,k)})_{\cdot j}  &= \sum_{\rho \in \mathcal{R}} (b^{(1,k)})_\rho \cdot \rho_j,\\
(a^{(2,0,k)})_{\cdot jm} &= \sum_{\rho, \sigma \in \mathcal{R}} (b^{(2,k)})_{\rho\sigma} \cdot \rho_j \sigma_m + \frac{1}{2} \sum_{\rho\in\mathcal{R}} (b^{(1,k)})_\rho \cdot \rho_j\rho_m.
\end{aligned}
\end{equation}
Furthermore, incorporating~\eqref{eq:bIrelation} yields a direct connection between the continuous coefficients and moments. Specifically, for each $i$ and $n$, when $a^{(i,n,\cdot)}$ denotes the collection of $(a^{(i,n,k)})_{\cdot j}$ for all $j,k$, we obtain:
\begin{equation}\label{eq:a-I}
a^{(1,0,\cdot)} = -\mathcal{I}_1[\bar{u}], \quad a^{(2,0,\cdot)} = \frac{1}{2}\mathcal{I}_2[\bar{u}], 
\end{equation}
This provides a computational framework for determining the continuous coefficients $a^{(i,n,\cdot)}$ via force moments. The detailed derivations for this computational process are available in~\cite{braun2025higher}.

\subsection{Proof of Theorem~\ref{thm:dislocation}}
\label{app:proof-thm-disloc}
We now present the proof of Theorem~\ref{thm:dislocation}. Our argument builds upon the approach developed in~\cite[Theorem 7]{braun2022asymptotic} for screw dislocations and extends the analysis to accommodate edge dislocations as well.
\begin{proof}[Proof of Theorem \ref{thm:dislocation}]
\noindent\textit{The Case ($p=0$)}: 
From the assumptions of Theorem~\ref{thm:dislocation}, we have $J \leq K-2$. The continuum displacement field $u_0^{\rm C}$ is defined according to the dislocation model presented in \S~\ref{sec:disloc} as:
\begin{equation}
\label{eq:u_0C}
u_0^{\rm C} := \begin{cases}
u^{\rm lin} & \text{for the antiplane screw dislocations,}\\
u^{\rm lin}\circ\xi^{-1} & \text{for the edge dislocations}.
\end{cases}
\end{equation}
The result from~\cite{braun2022asymptotic} demonstrates that for antiplane screw dislocations $u_0^{\rm C} = u^{\rm lin}$, with the error estimate
\begin{equation}\label{eq:antierrorlowestorder}
\big\lvert D^j (\bar{u}-u_0^{\rm lin}) (\ell) \big\rvert \lesssim \lvert{\ell}\rvert^{1-d-j} \log (\lvert{\ell}\rvert),\quad 1\leq j \leq J.
\end{equation}
while $\nabla u^{\rm lin} \in C^\infty(\mathbb{R}^d\backslash\{0\};\mathbb{R}^N)$ with $\lvert \nabla^j u^{\rm lin} (\ell) \rvert \lesssim \lvert{\ell}\rvert^{-j},~\text{for all}\ j\geq 1.$

We extend these results to edge dislocations and $u_0^{\rm C} = u^{\rm lin}\circ\xi^{-1}$ in this case. Following~\cite[Lemma 3.1(iii)]{EOS2016}, the relationship between $\nabla^ju^{\rm C}_0$ and $\nabla^ju^{\rm lin}$ is given by
\begin{equation*}
\nabla^ju_0^{\rm C} = \nabla^ju^{\rm lin} + O(\lvert\ell\rvert^{-1-j}).
\end{equation*}
This leads to the error estimate:
\begin{equation}\label{eq:roneestimate}
\begin{aligned}
\left\lvert D^j (\bar{u}-u_0^{\rm C}) (\ell) \right\rvert =&\big\lvert [D^j\bar{u}-D^ju^{\rm lin}(\xi^{-1}(\ell))] (\ell) \big\rvert\\
\leq&\big\lvert D^j\bar{u}(\ell)-D^ju^{\rm lin}(\ell) \rvert+ O(\lvert \ell\rvert^{-1-j})\\
\lesssim& \lvert{\ell}\rvert^{-1-j}\log (\lvert{\ell}\rvert),\quad 1\leq j \leq J.
\end{aligned}
\end{equation}
When $\nabla u_0^{\rm C} \in C^\infty(\mathbb{R}^d\backslash\{0\};\mathbb{R}^N)$, for $j\geq 1$, $\nabla u_0^{\rm C}$ satisfies
\begin{equation}
\label{eq:uzeroestimate-edge}
\big\lvert \nabla^j u_0^{\rm C} (\ell) \big\rvert = \big\lvert \nabla^j u^{\rm lin}(\xi^{-1}(\ell)) (\ell) \big\rvert = \big\lvert \nabla^j u^{\rm lin}(\ell)\big\rvert + O(\lvert\ell\rvert^{-1-j}) \lesssim \lvert{\ell}\rvert^{-j}.
\end{equation}

Hence, we have for antiplane screw and edge dislocations,
\begin{equation}\label{eq:Dislerrorlowestorder}
  \big\lvert D^j (\bar{u}-u_0^{\rm C}) (\ell) \big\rvert \lesssim \lvert{\ell}\rvert^{1-d-j} \log (\lvert{\ell}\rvert),\quad 1\leq j \leq J.
\end{equation}
while $\nabla u_0^{\rm C} \in C^\infty(\mathbb{R}^d\backslash\{0\};\mathbb{R}^N)$ with $\lvert \nabla^j u_0^{\rm C} (\ell) \rvert \lesssim \lvert{\ell}\rvert^{-j},~\text{for all}\ j\geq 1$.


\noindent\textit{The Case ($p \geq 1$)}: The proof follows the same approach as in~\cite[Section 7.2]{braun2022asymptotic}, since the distinction between edge and screw dislocations primarily affects the definition of the leading-order term $u_0^{\rm C}$ by~\eqref{eq:u_0C}. For $p=0$, we have already established that edge dislocations yield the same error estimates~\eqref{eq:Dislerrorlowestorder} as those derived for screw dislocations in~\cite[Section 7.1]{braun2022asymptotic}. Hence, the difference in dislocation type does not influence the subsequent analysis for $p \geq 1$, and the same proof strategy applies.

\end{proof}

\section{Supplementary Results in \S~\ref{sec:Scheme} and \S~\ref{sec:pde}}
\label{app:supply and proof}
\subsection{Analysis of multipole moment approximations}
\label{app:sec:b}

Let $b$ and $b_{\rm exact}$ denote the complete collections of tensors $b^{(i,k)}$ and $b^{(i,k)}_{\rm exact}$ respectively, for all possible $(i,k)$ pairs. For a given $b$, solutions in $\mathcal{U}_{p,R}^{(b)}$ yield a specific case of Theorem~\ref{th:galerkin}.
\begin{lemma}\label{th:galerkinfixedb}
Suppose that $\bar{u}$ is a strongly stable solution of \eqref{eq:equil_edge}. Denote the numerical error ${\varepsilon}^{\rm num}:=R_{\rm c}^{-1}\log(R_{\rm c}) +e^{-cN_{\rm pde}}$. For $R$ sufficiently large, there exists a solution $\bar{v}_{p,R}^{\rm b}\in \mathcal{U}_{p,R}^{\rm (b)}$, $\bar{u}_{p,R}^{\rm b}=\hat{u}^{\rm num}_{p} + \bar{v}_{p,R}^{\rm b}$  to \eqref{eq:galerkin_approx} such that
\begin{equation}\label{eq:ubR}
 \big\| D\bar{u} - D\bar{u}_{p,R}^{\rm b} \big\|_{\ell^2}
 \lesssim R^{-1 - p} \log^{p+1}(R) + \sum_{i=1}^p \big\lvert b^{(i,\cdot)} - b_{\rm exact}^{(i,\cdot)} \big\rvert R^{-i}+ {\varepsilon}^{\rm num}.
\end{equation}
\end{lemma}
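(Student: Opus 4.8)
The plan is to reduce Lemma~\ref{th:galerkinfixedb} to the already-established error analysis of Theorem~\ref{th:galerkin} by a careful bookkeeping of the three distinct error sources, treating the fixed--$b$ ansatz space $\mathcal{U}_{p,R}^{\rm (b)}$ as a perturbation of the exact corrector space $\mathcal{U}_{p,R}$. First I would invoke Theorem~\ref{thm:dislocation} (equivalently~\eqref{eq:exp_b}) to write the true equilibrium as $\bar u = \hat u_p + \sum_{i=1}^p\sum_{k=1}^N b^{(i,k)}_{\rm exact}:D^i_{\mathcal S}\mathcal G_k + r_{p+1}$, with $|D^j r_{p+1}(\ell)|\lesssim |\ell|^{-1-j-p}\log^{p+1}(|\ell|)$. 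The point is that $\hat u_p^{\rm num} + \sum_{i,k} b^{(i,k)}:D^i_{\mathcal S}\mathcal G_k$ is a legitimate element of the affine space $\hat u_p^{\rm num}+\mathcal{U}_{p,R}^{\rm (b)}$ once its tail outside $B_R$ is cut off, so it serves as a quasi-best-approximation test function; the residual it produces splits into (i) the decay-limited part from $r_{p+1}$, (ii) the multipole mismatch $\sum_i (b^{(i,\cdot)}-b^{(i,\cdot)}_{\rm exact}):D^i_{\mathcal S}\mathcal G_k$, and (iii) the predictor discretization error $\hat u_p - \hat u_p^{\rm num}$.

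Next I would quantify each piece. For (i), truncating $\bar u - \hat u_p - \sum_{i,k}b^{(i,k)}_{\rm exact}:D^i_{\mathcal S}\mathcal G_k = r_{p+1}$ to $B_R$ and using~\eqref{eq:maindecay} with $j=1$ gives the $R^{-1-p}\log^{p+1}(R)$ term, exactly as in the proof of Theorem~\ref{th:galerkin} in~\cite[Section 7.2]{EOS2016}; the truncation tail $\|D r_{p+1}\|_{\ell^2(|\ell|>R)}$ is of the same order by summing the decay estimate. For (ii), the key fact is the decay of the discrete Green's function derivatives, $|D^j D^i_{\mathcal S}\mathcal G_k(\ell)|\lesssim |\ell|^{-d-i-j+1}$ (this follows from the standard lattice Green's function estimates underlying~\eqref{eq:def_lattice_G}), so that for each $i$ the contribution of $|b^{(i,\cdot)}-b^{(i,\cdot)}_{\rm exact}|\cdot D^i_{\mathcal S}\mathcal G_k$ to $\|D\cdot\|_{\ell^2}$ is controlled by $|b^{(i,\cdot)}-b^{(i,\cdot)}_{\rm exact}|\,R^{-i}$ (here $d=2$, and $\|\,|\ell|^{-1-i}\,\|_{\ell^2(\Lambda)}\lesssim R^{-i}$ when the sum is over $|\ell|\gtrsim 1$, while near the core the difference $D^i_{\mathcal S}\mathcal G_k$ is absorbed into the corrector $w\in\mathcal W_R$). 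For (iii), I would simply quote the predictor error estimate~\eqref{eq:approx-pde}, or rather its building block Lemma~\ref{thm:pde}, which yields $\|D\hat u_p - D\hat u_p^{\rm num}\|_{\ell^2}\lesssim R_{\rm c}^{-1}\log(R_{\rm c}) + e^{-cN_{\rm pde}} =: \varepsilon^{\rm num}$.

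Having assembled a test function whose residual $\delta\mathcal E(\cdot)[v_R]$ is bounded by the sum of (i)--(iii) for all $v_R\in\mathcal W_R$, I would close the argument with the inverse function theorem / Lyapunov–Schmidt stability argument of~\cite[Section 7.2]{EOS2016}: strong stability~\eqref{eq:stabeq} of $\bar u$ persists (with a slightly smaller constant) for the Galerkin operator on $\mathcal W_R$ once $R$ is large, so a genuine solution $\bar v_{p,R}^{\rm b}$ of~\eqref{eq:galerkin_approx} exists within $O(\text{residual})$ of the test function, giving~\eqref{eq:ubR}. The main obstacle, and the step requiring the most care, is (ii): one must verify that the coarse truncation of $D^i_{\mathcal S}\mathcal G_k$ to $B_R$ — needed to land in $\mathcal W_R$ — produces a tail in $\ell^2$ of size $\lesssim R^{-i}$ uniformly in the mismatch direction, and that the near-core part of $D^i_{\mathcal S}\mathcal G_k$ (which is not small) is harmlessly representable within the free corrector $w$ rather than contaminating the estimate; this is exactly the point where the moment-iteration analysis of~\cite[Sec.~3]{braun2025higher} must be transplanted to the dislocation setting, replacing their point-defect Green's function decay by the anisotropic elastic lattice Green's function decay for dislocations.
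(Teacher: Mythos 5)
Your proposal is correct and follows essentially the same route as the paper: the paper packages your steps (i) and (iii) by a triangle inequality through the intermediate Galerkin solutions $u^*_{p,R}$ and $\bar{u}^{\rm c}_{p,R}$, invoking Theorem~\ref{th:galerkin} and \eqref{eq:approx-pde}, and then isolates your step (ii) as the single new estimate, namely that the fixed-versus-exact coefficient mismatch only contributes through the $\ell^2$ tail of $D\,D^i_{\mathcal S}\mathcal G$ outside $B_R$ (the near-core part being absorbed into $w\in\mathcal W_R$), which by $\big|D\,D^i_{\mathcal S}\mathcal G(\ell)\big|\lesssim|\ell|^{-1-i}$ gives the factor $R^{-i}$ exactly as you compute. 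One cosmetic slip: your displayed Green's function decay exponent $|\ell|^{-d-i-j+1}$ is one power too strong (it should be $|\ell|^{2-d-i-j}$, i.e.\ $|\ell|^{-1-i}$ for $d=2$, $j=1$), but your actual norm computation uses the correct exponent, so the conclusion is unaffected.
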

\begin{proof}[Proof of Lemma~\ref{th:galerkinfixedb}]
The target geometric error $\big\| D\bar{u} - D\bar{u}_{p,R}^{\rm b} \big\|_{\ell^2}$ is split as:
\begin{equation}
\label{eq:DubR_split}
\begin{aligned}
\big\| D\bar{u} - D\bar{u}_{p,R}^{\rm b} \big\|_{\ell^2} &\leq \big\| D\bar{u} - D\bar{u}_{p,R}^{\rm c} \big\|_{\ell^2} + \big\| D\bar{u}_{p,R}^{\rm c} - D\bar{u}_{p,R}^{\rm b} \big\|_{\ell^2}\\
&=\big\| D\bar{u} - D\bar{u}_{p,R}^{\rm c} \big\|_{\ell^2} + \big\| Dv^{*}_{p,R} - D\bar{v}_{p,R}^{\rm b} \big\|_{\ell^2}.
\end{aligned}
\end{equation}
The first term is bounded by Theorem~\ref{th:galerkin} and \eqref{eq:approx-pde} as
\begin{equation}
\begin{aligned}
\big\| D\bar{u} - D\bar{u}_{p,R}^{\rm c}\big\|_{\ell^2} &\leq \big\| D\bar{u} - Du^{*}_{p,R} \big\|_{\ell^2} + \big\| Du^{*}_{p,R} - D\bar{u}_{p,R}^{\rm c} \big\|_{\ell^2}\\
&\lesssim R^{- 1 - p} \log^{p+1}(R) + R_{\rm c}^{-1}\log(R_{\rm c}) +e^{-cN_{\rm pde}}.
 \end{aligned}
\end{equation}

The second term in~\eqref{eq:DubR_split} corresponds to the numerical error in the moment tensor approximation, note that the respective solutions reside in the two spaces defined by \eqref{eq:exact_space_CLE} and \eqref{eq:u_bcR}. According to \cite[Lemma 6.2]{EOS2016}, the lattice Green's function satisfies $\big|D^i \mathcal{G}(\ell)\big|\leq C(1+|\ell|)^{-d-i+2}$. Therefore,
\begin{equation}
\label{eq:error-b}
\begin{aligned}
\big\| Dv^{*}_{p,R} - D\bar{v}_{p,R}^{\rm b} \big\|_{\ell^2} 
&\lesssim \sum_{i=1}^p \big\lvert b^{(i,\cdot)} - b_{\rm exact}^{(i,\cdot)} \big\rvert \cdot \Big(\sum_{|\ell|>R}\big|D^i\mathcal{G}(\ell)\big|^2\Big)^{1/2}\\ &\lesssim \sum_{i=1}^p \big\lvert b^{(i,\cdot)} - b_{\rm exact}^{(i,\cdot)} \big\rvert \cdot R^{-i}.
\end{aligned}
\end{equation}
Combining the above estimates via \eqref{eq:DubR_split} completes the proof.
\end{proof}

\subsubsection*{Moment iteration}
\label{subsec:moment iteration}

Leveraging the linear transformation~\eqref{eq:bIrelation}, our approach shifts the focus towards evaluating the force moments $\mathcal{I}_i$ instead of $b^{(i, \cdot)}$.


We first introduce a truncation operator following the constructions in~\cite{2014-dislift}. Let $\eta_R: \Lambda\rightarrow\mathbb{R}$ be a smooth cut-off function such that $\eta_R=1$ for $\lvert \ell \rvert \leq R/3$, $\eta_R =0$ for $\lvert \ell \rvert > 2R/3$ and $\lvert \nabla^j \eta_R \rvert \leq C_j R^{-j}$ for $0\leq j\leq3$. Then, we define the truncated force moments \eqref{eq:def_Ii} by 
\begin{equation} \label{eq:defn_IjR}
  \mathcal{I}_{i,R}[u] := \sum_{\ell \in \Lambda} \big(H [u](\ell) \otimes \ell^{\otimes i}\big)\cdot \eta_R(\ell).
\end{equation}

The {\em stopping criterion} for the moment iteration is given by
\begin{equation}\label{eq:stop}
\big\lvert b_M^{(i,\cdot)} - b_{\rm exact}^{(i,\cdot)} \big\rvert = O\big(R^{i-1-p} \log^{p+1}(R)\big), \qquad \text{for all } 1 \leq i \leq p.
\end{equation}
This ensures that the estimate in~\eqref{eq:ubR} achieves the optimal convergence rate of $R^{-1-p} \log^{p+1}(R)$, up to a numerical error $\varepsilon^{\rm num}$, as stated in~\eqref{eq:approx-b-R}.

\subsection{Proof of Theorem \ref{th:galerkinaM} }
\label{app:proof-of-thm3.2}

Our goal in this section is to analyze the impact of the numerical approximations we introduced in \S~\ref{sec:sub:approximation} on the convergence rates of the cell problems.
\begin{proof}[Proof of Theorem \ref{th:galerkinaM}]
We start by decomposing the geometry error into four components:
\begin{equation}
\label{eq:error-split}
\begin{aligned}
\big\| D\bar{u} - D\bar{u}_{p,R} \big\|_{\ell^2} \leq& \big\| D\bar{u} - Du^{*}_{p,R} \big\|_{\ell^2}  +\big\| Du^{*}_{p,R} - D\bar{u}_{p,R}^{\rm c} \big\|_{\ell^2} \\
&+\big\| D\bar{u}_{p,R}^{\rm c} - D\bar{u}_{p,R}^{\rm b} \big\|_{\ell^2} +\big\| D\bar{u}_{p,R}^{\rm b} - D\bar{u}_{p,R} \big\|_{\ell^2}.
\end{aligned}
\end{equation}
The first part can be directly bounded by the estimates
in Theorem~\ref{th:galerkin}:
\begin{equation}
\big\| D\bar{u} - Du^{*}_{p,R} \big\|_{\ell^2} \leq C_{\rm G} \cdot 
 R^{- 1- p} \cdot \log^{p+1}(R).
\end{equation}
For the fourth part in~\eqref{eq:error-split}, which corresponds to the approximate error of the continuous coefficients of multipole expansion. According to the two sapces defined in~\eqref{eq:u_bcR} and~\eqref{eq:u_acR}, the error estimate of the fourth term in~\eqref{eq:error-split} is
\begin{equation}
\label{eq:error-a}
\big\| D\bar{u}_{p,R}^{\rm b} - D\bar{u}_{p,R} \big\|_{\ell^2} 
\lesssim R^{-1-p},
\end{equation}
where it follows from the estimates in~\cite[Lemma 18]{braun2022asymptotic}.

Now combining the four estimates~\eqref{eq:thm3.1-1},~\eqref{eq:approx-pde},~\eqref{eq:approx-b-R} and~\eqref{eq:error-a} with the decomposition \eqref{eq:error-split} gives the geometry error. The energy error can be directly estimated by applying
\begin{equation}
\label{eq:GE}
\big|\mathcal{E}(\bar{u})-\mathcal{E}(\bar{u}_{i, R})\big| \lesssim \big\lVert D\bar{u} - D\bar{u}_{i, R} \big\rVert^2_{\ell^2} \qquad \textrm{for}~0\leq i\leq 2. 
\end{equation}
This yields the stated results.
\end{proof}

\subsection{Proof of Lemma~\ref{thm:pde}}
\label{app:proof-thm-pde}

We are ready to present the proof of the error estimates arising from the numerical approximation of the higher-order CLE equations.



\begin{proof}[Proof of Lemma~\ref{thm:pde}] 

Let $u_{i,R_{\rm c}}^{\rm C}$ be the solution to~\eqref{u1-pde}. From~\eqref{eq:rescale}, we have $u_{i,R_{\rm c}}^{\rm C} = r\cdot v_{i,R_{\rm c}}$ and  \(u_{i,N_{\rm pde}}^{\rm C} = r\cdot v_{i,N_{\rm pde}}\).
We begin by decomposing the numerical error into two parts:
\begin{equation}
\label{eq:error-pde-split}
\begin{aligned}
\big\| \nabla u_{i}^{C} - \nabla u_{i,N_{\rm pde}}^{\rm C}\big\|_{L^2} &
\lesssim \big\| \nabla u_{i}^{C} - \nabla u_{i,R_{\rm c}}^{\rm C} \big\|_{L^2} + \big\| \nabla v_{i,R_{\rm c}} - \nabla v_{i,N_{\rm pde}}\big\|_{L^2}.
\end{aligned}
\end{equation}
We first give the estimate of the first part, it is the truncation error of the domain. Denote $e_i =u_{i}^{C} - u_{i,R_{\rm c}}^{\rm C}$, which satisfies
\begin{equation}
\label{eq:error-Rc}
\begin{aligned}
\begin{cases}
-\divo(\mathbb{C} \nabla e_i) = 0 \quad &\text{in } B_{R_{\rm c}},\\
e_i= u_i^{\rm C} \quad &\text{on } \partial B_{R_{\rm c}}.
\end{cases}
\end{aligned}
\end{equation}

To separate scale effects, we introduce a change of variables by scaling. The scaled coordinate $\bm{r}_{B_1} \in B_1$ is related to the original coordinate $\bm{r}_{\rm c} \in B_{R_{\rm c}}$ by the relation $\bm{r}_{\rm c} = R_{\rm c} \bm{r}_{B_1}$. This mapping projects points from the disk $B_{R_{\rm c}}$ onto the unit disk $B_1$.
The choice of scaling factors for the functions is motivated by the decay estimates of the original solutions. Recall from~\eqref{eq:uCestimate} and the formulations of $\mathcal{S}_i$ that for sufficiently large $\lvert\ell\rvert$, \(\big\lvert \nabla u_i^{\rm C} (\ell)\big\rvert \leq C \lvert \ell \rvert^{-i-1} \log^i (\lvert \ell \rvert)\) and \(\lvert \nabla \mathcal{S}_i\rvert\lesssim \lvert \ell\rvert_{0}^{-3-i}\log^{i-1}(\lvert \ell \rvert).\)

To ensure that the corresponding scaled functions $\hat{u}_i$ and $\hat{f}_i$ remain uniformly bounded on the unit disk $B_1$ independently of $R_c$, we introduce scaling factors $R_c^i$ and $R_c^{i+1}$, respectively. Specifically, we define:
\(\hat{u}_i(\bm{r}_{B_1}) := R_{\rm c}^i \cdot
u_{i}^{\rm C}(R_{\rm c} \bm{r}_{B_1})\), \(\hat{u}_{i,R_{\rm c}}(\bm{r}_{B_1}) := R_{\rm c}^i \cdot u_{i,R_{\rm c}}^{\rm C}(R_{\rm c} \bm{r}_{B_1})\), \(\hat{e}_i(\bm{r}_{B_1}) :=R_{\rm c}^i \cdot e_i(R_{\rm c} \bm{r}_{B_1})\), \(\hat{f}_i(\bm{r}_{B_1}) := R_{\rm c}^{i+1} \cdot
 f_i(R_{\rm c} \bm{r}_{B_1})\). 
It is straightforward to verify that $\hat{u}_i\in L^2(B_1)$ and $\hat{f}_i\in L^2(B_1)$. Under this scaling, \eqref{eq:uCiPDE} and~\eqref{eq:error-Rc} become the following scaled equations on $B_1$, respectively:
\begin{equation}
\begin{aligned}
& -\divo(\mathbb{C} \nabla \hat{u}_i) = \hat{f}_i, \quad \text{in } B_1
\hspace{1em} {\rm and} \hspace{1em} 
\left\{
\begin{array}{ll}
-\divo(\mathbb{C} \nabla \hat{e}_i) = 0, & \text{in } B_1, \\
\hat{e}_i = \hat{u}_i, & \text{on } \partial B_1
\end{array}
\right..
\end{aligned}
\label{eq:compact_pair}
\end{equation}



Suppose $\mathbb{C}$ is positive definite, from the elliptic regularity theory~\cite{evans2022partial}, the solution of \eqref{eq:compact_pair} satisfies the priori estimate: there exists constant 
$\beta_1 > 0$ (depending only on $B_1$) such that:
\begin{equation*}
\label{eq:regularity}
\|\hat{u}_i\|_{H^2(B_1)} \leq \beta_1 \left( \|{\hat{f}_i}\|_{L^2(B_1)} + \|{\hat{u}_i}\|_{L^2(B_1)} \right)\lesssim \beta_1.
\end{equation*}
By the trace theorem~\cite{evans2022partial}, there exists a bounded linear operator 
$T: H^2(B_1) \to H^{3/2}(\partial B_1)$, then there exists constant $\beta_2 > 0$
 such that:
\begin{equation*}
\|{\hat{u}_i}\|_{H^{3/2}(\partial B_1)} = \|{T\hat{u}_i}\|_{H^{3/2}(\partial B_1)} \leq \beta_2 \|{\hat{u}_i}\|_{H^2(B_1)}\lesssim \beta_1\beta_2. 
\end{equation*}
Next we give the the priori estimate for the solution of the homogeneous elliptic equation~\eqref{eq:compact_pair}: there exists constant $\beta_3 > 0$ (depending only on $B_1$) such that:
\begin{equation*}
\|{\hat{e}_i}\|_{H^1(B_1)} \leq \beta_3 \|{\hat{u}_i}\|_{H^{1/2}(\partial B_1)}\lesssim \beta_3\|{\hat{u}_i}\|_{H^{3/2}(\partial B_1)} \lesssim \beta_1\beta_2\beta_3=:\hat{\beta}_i
\end{equation*}
From the definition of the $H^1$ norm we know \(\|{\nabla \hat{e}_i}\|_{L^2(B_1)} 
\leq \|{\hat{e}_i}\|_{H^1(B_1)} \leq \beta_i\), 
where $\hat{\beta}_{i}$ is independent of $R_{\rm c}$. Now transform the estimate on the unit disk back to the original domain $B_{R_{\rm c}}$. 
Compute the $L^2$ norm of the original error gradient:
\begin{equation*}
\|{\nabla e_i}\|_{L^2(B_{R_{\rm c}})}^2 = \int_{B_{R_{\rm c}}} \lvert{\nabla e_i}\rvert^2 \, \mathrm{d}\bm{r}_{\rm c} = 
\int_{B_1} \Big\lvert{\frac{1}{R_{\rm c}^{i+1}} \nabla \hat{e}_i}\Big\rvert^2 R_{\rm c}^2 \, \mathrm{d}\bm{r}_{B_1}= 
\frac{1}{R_{\rm c}^{2i}} \|{\nabla \hat
{e}_i}\|_{L^2(B_1)}^2 \leq \frac
{\hat{\beta}_{i}}{R_{\rm c}^{2i}} 
\end{equation*}
Then we get the estimate of the truncation error
\begin{equation}
\label{eq:truncation error}
\big\|\nabla u_i^{\rm C} - \nabla u_{i,R_{\rm c}}^{\rm C}\big\|_{L^2(B_{R_{\rm c}})}\leq \hat{\beta}_i R_{\rm c}^{-i}
\end{equation}
where the constant $\hat{\beta}_i$ is independent of the truncation radius $R_{\rm c}$.

The second term in~\eqref{eq:error-pde-split} arises from the numerical error associated with solving~\eqref{eq:coordinate transformation} using the spectral Legendre-Galerkin method. According to Lemma~\ref{lem:analytic_solution}, under the condition that $v_i$ is analytic the method can achieve the following spectral accuracy~\cite{2011Spectral}:
\begin{equation}
\label{eq:error-spectral}
\| \nabla v_{i,R_{\rm c}} - \nabla v_{i,N_{\rm pde}}\|_{L^2}\leq C_{\rm s}\,e^{-cN_{\rm pde}}.
\end{equation}
where $C_{\rm s},c$ are positive constants independent of $v_i,N_{\rm pde}$ and $R_{\rm c}$. Combine ~\eqref{eq:truncation error} with~\eqref{eq:error-spectral} and taking $C_p = {\rm max}\{\hat{\beta}_i, C_{\rm s}\}>0$ independent of $R_{\rm c}$ and $N_{\rm pde}$, we can obtain the error estimate for~\eqref{eq:pde-error}.
\end{proof}

\bibliographystyle{siamplain}
\bibliography{bib}

\end{document}